\title[Finiteness for some $\mathbb{Q}$-PD groups]{Finiteness properties for some rational Poincar\'e duality groups}
\author{Jim Fowler}
\address{The Ohio State University \\ 100 Math Tower \\ 231 West 18th Avenue \\ Columbus, OH 43210--1174}
\email{fowler@math.ohio-state.edu}
\subjclass[2010]{57P10, 19J35}
\newcommand{\Z}{\mathbb{Z}}
\newcommand{\Q}{\mathbb{Q}}
\newcommand{\R}{\mathbb{R}}
\DeclareMathOperator{\CAT}{CAT}
\DeclareMathOperator{\SO}{SO}
\DeclareMathOperator{\trace}{tr}
\DeclareMathOperator{\proj}{proj}
\DeclareMathOperator{\PD}{PD}
\DeclareMathOperator{\FP}{FP}
\DeclareMathOperator{\FL}{FL}
\DeclareMathOperator{\FH}{FH}
\DeclareMathOperator{\V}{V}
\newtheorem{theorem}{Theorem}[section]
\newtheorem*{maintheorem}{Main Theorem}
\newtheorem{corollary}[theorem]{Corollary}
\newtheorem*{selberglemma}{Selberg's Lemma}
\newtheorem{proposition}[theorem]{Proposition}
\theoremstyle{remark}
\newtheorem{example}[theorem]{Example}
\newtheorem{question}[theorem]{Question}
\theoremstyle{definition}
\newtheorem{definition}[theorem]{Definition}
\DeclareMathOperator{\id}{id}
\DeclareMathOperator{\op}{op}
\DeclareMathOperator{\Obj}{Obj}
\DeclareMathOperator{\Or}{Or}
\newcommand{\category}[1]{\mbox{\textbf{#1}}}
\DeclareMathOperator{\Hom}{Hom}
\newcommand{\defnword}[1]{\textit{#1}\index{#1}}
\DeclareMathOperator{\Wall}{Wall}
\newcommand{\ReducedWall}{\widetilde{\Wall}}
\DeclareMathOperator{\Span}{span}
\newcommand{\point}{\mbox{point}}
\newcommand{\Zmod}[1]{\Z/#1\Z}
\newcommand{\GmodKmodGamma}{\Gamma \backslash G/K}
\begin{document}

\begin{abstract}
  A combination of Bestvina--Brady Morse theory and an
  \textit{acyclic} reflection group trick produces a torsion-free
  finitely presented $\mathbb{Q}$-Poincar\'e duality group which is
  not the fundamental group of an aspherical closed ANR
  $\mathbb{Q}$-homology manifold.

  The acyclic construction suggests asking which
  $\mathbb{Q}$-Poincar\'e duality groups act freely on
  $\mathbb{Q}$-acyclic spaces (i.e., which groups are
  $\mbox{FH}(\mathbb{Q})$).  For example, the orbifold fundamental
  group $\Gamma$ of a good orbifold satisfies $\mathbb{Q}$-Poincar\'e
  duality, and we show $\Gamma$ is $\mbox{FH}(\mathbb{Q})$ if the
  Euler characteristics of certain fixed sets vanish.
\end{abstract}

\maketitle

\section{Introduction}

Existence and uniqueness questions (i.e., ``the Borel conjecture'')
for closed aspherical $\Z$-homology manifolds can be formulated for
$R$-homology manifolds.  Mike Davis does this in \cite{MR1747535}: he
asks if some algebra (i.e., having $R$-Poincar\'e duality) is
necessarily a consequence of some geometry (i.e., being an
$R$-homology manifold).
\begin{question}[M.~Davis]
  \label{question:davis}
  Is every torsion-free finitely presented group satisfying
  $R$-Poincar\'e duality the fundamental group of an aspherical closed
  $R$-homology $n$-manifold?
\end{question}
\noindent
Theorem~\ref{theorem:torsion-free-counterexample} answers this
question in the negative.  However, the construction of that
counterexample, as well as the spirit of the original question,
suggests weakening the conclusion.
\begin{question}[Acyclic variant of a question of M.~Davis]
  \label{question:davis-modified}
  Suppose $\Gamma$ is a finitely presented group satisfying
  $R$-Poincar\'e duality.  Is there a closed $R$-homology
  manifold $M$, with
\begin{itemize}
\item $\pi_1 M = \Gamma$, and
\item $H_\star(\tilde{M};R) = H_\star(\point;R)$, that is, $R$-acyclic universal cover?
\end{itemize}
\end{question}
\noindent
Instead of asking for an aspherical homology manifold (as in the
original question), this modified question only asks that the homology
manifold have $R$-acyclic universal cover.  Nevertheless, a group
acting geometrically and cocompactly on an $R$-acyclic $R$-homology
manifold still possesses $R$-Poincar\'e duality, so an affirmative
answer to Question~\ref{question:davis-modified} provides a
``geometric source'' for the $R$-Poincar\'e duality of a group.

For example, if a finite group $G$ acts on a manifold $B\pi$, then
there is an extension
$$
1 \to \pi \to \Gamma \to G \to 1, \mbox{ with $\Gamma = \pi_1\left( (EG
  \times B\pi)/G \right)$,}
$$
and $\Gamma$ satisfies $\Q$-Poincar\'e duality.  But for $\Gamma$ to
be the fundamental group of a $\Q$-homology manifold with acyclic
universal cover requires, in particular, that $\Gamma$ be the
fundamental group of a \textit{finite complex} with $\Q$-acyclic
universal cover.  Thus we are led to ask 
\begin{question}
  For which groups $\Gamma$ does there exist a finite complex $X$ with
\begin{itemize}
\item $\bar{H}_\star(\tilde{X};R) = 0$ and
\item $\pi_1 X = \Gamma$?
\end{itemize}
\end{question}
\noindent In other words, which groups act ``nicely'' (e.g., properly
discontinuously, cellularly, cocompactly) on acyclic complexes?  This
is property $\FH(R)$ introduced by M.~Bestvina and N.~Brady.

\nocite{MR620445}\nocite{MR546920}

Answering this question in the context of orbifolds amounts to a finiteness obstruction.  W.~L\"uck designed an
equivariant finiteness theory \cite{MR1027600} (and there are other
descriptions of equivariant finiteness obstructions in the literature
\cite{MR620445,MR546920}).  In
section~\ref{section:modules-over-categories} we will describe an
equivariant finiteness theory in a setup similar to that in \cite{MR1659969}, with the following result:
\begin{maintheorem}
  Suppose a finite group $G$ acts on a finite complex $B\pi$; let $\Gamma$
  be $\pi_1\left((EG \times B\pi)/G\right)$, that is, the orbifold fundamental
  group of $(B\pi)/G$.

  If, for all nontrivial subgroups $H \subset G$, and every connected
  component $C$ of $(B\pi)^H$,
$$
\chi\left( C \right) = 0,
$$
then $\Gamma \in \FH(\Q)$, i.e., there exists a finite CW complex $X$ with $\pi_1 X = \Gamma$ and $\tilde{X}$ rationally acyclic.
\end{maintheorem}
In section~\ref{subsection:lefschetz}, we will also see that the vanishing
of certain Euler characteristics is necessary, namely $\chi( (B\pi)^H
)$ for cyclic subgroups $H$.  There are some examples in 
section~\ref{section:examples}.

\subsection*{Acknowledgments}

This paper grew out of my Ph.D.~thesis; I am very thankful for all
the help that my thesis advisor, Shmuel Weinberger, has given me.  I
also thank the referee for many detailed and helpful comments which
improved the paper.

\section{Reflection group trick}
\label{section:reflection-group-trick}

In \cite{MR1600586}, M.~Davis combined Bestvina--Brady Morse theory
with the reflection group trick to produce Poincar\'e duality groups
that are not finitely presented---and therefore, not fundamental
groups of aspherical manifolds.  We apply this technique to rational
Poincar\'e duality groups and rational homology manifolds, answering
Question~\ref{question:davis} in the negative.
\begin{theorem}
\label{theorem:torsion-free-counterexample}
There exists a torsion-free, finitely presented $\PD(\Q)$-group
$\Gamma$ which is not the fundamental group of an aspherical closed
ANR $\Q$-homology manifold.
\end{theorem}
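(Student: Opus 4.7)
The strategy parallels Davis's construction in \cite{MR1600586} with $\Q$ in place of $\Z$ throughout. The aim is to produce a finitely presented $\PD(\Q)$-group $\Gamma$ that fails to be of type $\F$ over $\Z$; such a $\Gamma$ cannot be the fundamental group of any aspherical closed ANR $\Q$-homology manifold $M$, since $M$ is a compact ANR and hence has the homotopy type of a finite CW complex, which in the aspherical case would be a finite $K(\Gamma,1)$.

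First I would invoke Bestvina--Brady Morse theory to manufacture a suitable input group. Pick a finite flag complex $L$ that is simply connected and $\Q$-acyclic but not $\Z$-acyclic---a flag triangulation of a Moore space $M(\Z/p,2)$ works. Let $A_L$ be the associated right-angled Artin group and $H_L$ the kernel of the length homomorphism $A_L\to\Z$. By the Bestvina--Brady criterion, $H_L$ is finitely presented (since $L$ is simply connected), of type $\FP(\Q)$ (since $L$ is $\Q$-acyclic), but \emph{not} of type $\FP(\Z)$ (since $L$ is not $\Z$-acyclic); consequently $H_L$ is not of type $\F$.

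Next I would apply an acyclic version of the Davis reflection group trick. Take a finite $2$-complex presenting $H_L$, thicken it to a compact PL-manifold with boundary, triangulate its boundary as a flag complex $K$ whose vertex links are $\Q$-homology spheres, and form the closed pseudomanifold $P$ obtained by reflecting copies of the thickening across the wall pattern prescribed by $K$. The resulting $\Gamma:=\pi_1 P$ is torsion-free, finitely presented, and acts freely and cocompactly on a contractible $\Q$-homology manifold $\tilde P$, so $\Gamma$ is a $\PD(\Q)$-group. The construction is arranged so that $H_L$ sits inside $\Gamma$ as the stabilizer of the original chamber in such a way that any $\FP(\Z)$-finiteness for $\Gamma$ would descend to $H_L$; since $H_L$ fails to be $\FP(\Z)$, so does $\Gamma$, and hence $\Gamma$ is not of type $\F$.

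The hard part will be the middle step. Davis's original integral argument uses PL-sphere links and produces a $\PD(\Z)$-group; here the link condition is relaxed to being a $\Q$-homology sphere, which is strictly weaker and is precisely what lets us retain finite presentability of $\Gamma$ while still getting rational Poincar\'e duality. I must verify that this weaker link hypothesis still yields a $\Q$-homology manifold universal cover, and that the way $H_L$ is embedded in $\Gamma$ by the trick still forces the desired transfer of integral finiteness defects from $\Gamma$ back to $H_L$---making the $\F$-obstruction above bite.
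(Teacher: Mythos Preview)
Your overall architecture---Bestvina--Brady input fed into the Davis reflection group trick, then a retraction argument to transfer the $\FP(\Z)$ failure upward---is the same as the paper's. But there is a real gap in the middle step, and your ``hard part'' paragraph misidentifies where the difficulty lies.

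The error is in what you thicken. You propose to thicken a finite $2$-complex \emph{presenting} $H_L$. Such a complex has simply connected universal cover, but there is no reason for that universal cover to be $\Q$-acyclic (indeed, $H_2$ of the Bestvina--Brady group is typically enormous). The reflection group trick, applied to a compact manifold-with-boundary $N$, produces a closed manifold $W$ whose universal cover is assembled from copies of $\tilde N$ glued along mirrors; the Mayer--Vietoris argument that gives $\tilde W$ $\Q$-acyclic requires $\tilde N$ itself to be $\Q$-acyclic. With your choice of $N$ this fails, and you do not get a $\PD(\Q)$ group. The paper uses the full strength of Bestvina--Brady here: because $L$ is $\Q$-acyclic, the kernel $H_L$ is not just $\FP(\Q)$ but $\FH(\Q)$, so there is a finite complex $K$ with $\pi_1 K = H_L$ \emph{and} $\bar H_\star(\tilde K;\Q)=0$. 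It is this $K$ that gets thickened. You record that $H_L\in\FP(\Q)$ but never actually use it; the property you need is $\FH(\Q)$.

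Relatedly, your discussion of relaxing the link condition to $\Q$-homology spheres is a red herring. Once you thicken a finite complex in some $\R^N$, the boundary of the regular neighborhood is an honest PL manifold, its links are genuine PL spheres, and the output $W$ of the trick is a genuine closed manifold---not a pseudomanifold, and not merely a $\Q$-homology manifold. The ``$\Q$'' in this argument enters solely through the $\Q$-acyclicity of $\tilde K$ (and hence of $\tilde W$), not through any weakening of the link hypothesis.
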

\noindent
The construction of such a group $\Gamma$ proceeds as follows:
\begin{itemize}
\item Let $X$ be a simply connected finite complex which is
  $\Q$-acyclic but not $\Z$-acyclic (for concreteness, the CW-complex
  $S^2 \cup e^3$ where the attaching map is a degree two map $S^2 \to
  S^2$).
\item Apply Bestvina--Brady Morse theory \cite{MR1465330} to $X$; this
  produces a group $G \not\in\FP(\Z)$ with $G \in \FH(\Q)$, so $G$
  acts freely and cocompactly on a $\Q$-acyclic space; let $K$ be the
  quotient of such a free action.
\item Apply a variant of M.~Davis' reflection group trick
  \cite{MR690848} to a thickened version of $K$; after taking a cover,
  this produces a torsion-free group $\Gamma$ satisfying $\PD(\Q)$.
\item Verify that the space $B\Gamma$ is not homotopy equivalent to a
  finite complex.
\end{itemize}
A closed ANR $\Q$-homology manifold is homotopy equivalent to a finite
complex \cite{MR451247}.  Since $B\Gamma$ is not homotopy
equivalent to a finite complex, $\Gamma$ cannot be the fundamental
group of an aspherical closed $\Q$-homology manifold.

\subsection{Proof of Theorem~\ref{theorem:torsion-free-counterexample}}

We
exhibit a torsion-free, finitely presented $\PD(\Q)$-group which is
not the fundamental group of an aspherical finite complex, let alone a
closed ANR $\Q$-homology manifold.

\subsubsection{PL~Morse theory}

We begin by producing a group which satisfies a rational finiteness
property, but not an integral finiteness property.  Choose a simply
connected finite complex $X$ which is $\Q$-acyclic but not
$\Z$-acyclic; for the sake of concreteness,
$$
X = S^2 \cup_f e^3 \mbox{ with $f : \partial e^3 = S^2 \to S^2$ degree two.}
$$
Let $L$ be a flag triangulation of $X$.  Then consider the union of tori
$$
X = \bigcup_{\sigma \in L} \prod_{v \in \sigma} S^1.
$$
In the cube complex $X$, it is easy to check that the link of each
vertex is $L$, and, because $L$ is flag, this means that $X$ is
$\CAT(0)$ (see \cite{MR1744486}).

The following theorem summarizes a result of Bestvina--Brady PL~Morse
theory \cite{MR1465330}; this is a version of Morse theory designed to
analyze spaces such as the above cubical complex.
\begin{theorem}
  Let $L$ be a finite flag complex.  Let $A = A_L$ the associated
  right angled Artin group, and $\Gamma$ the kernel of a natural map $A_L \to \Z$.
\begin{itemize}
\item If $L$ is $R$-acyclic, then $\Gamma \in \FH(R)$.
\item If $L$ is simply connected, then $\Gamma$ is finitely presented.
\item If $L$ is not $R$-acyclic, then $\Gamma \not\in \FP(R)$.
\end{itemize}
\end{theorem}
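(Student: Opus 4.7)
The plan is to apply Bestvina--Brady PL Morse theory to the cube complex $X = \bigcup_\sigma \prod_{v \in \sigma} S^1$ and its universal cover $\tilde{X}$. Because $L$ is flag, $\tilde{X}$ is $\CAT(0)$ and hence contractible, and $X$ is a $K(A_L,1)$. The homomorphism $A_L \to \Z$ sending each generator to $1$ lifts to a cellular map $\tilde{f}: \tilde{X} \to \R$ that is affine on each cube. For each non-integer $t$, the kernel $\Gamma = \ker(A_L \to \Z)$ acts freely and cocompactly on the level set $\tilde{X}_t := \tilde{f}^{-1}(t)$, with finite quotient of dimension $\dim L$.

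The heart of the proof is a PL Morse lemma: as $t$ crosses an integer value, the sublevel set $\tilde{X}^{\leq t}$ changes, at each affected vertex $v$, by coning off the ``descending link'' of $v$. A direct calculation in the cubical structure identifies both the ascending and descending link of every vertex with $L$ itself. One then reads off the topology of $\tilde{X}_t$ from the contractibility of $\tilde{X}$ together with the sequence of cone attachments, via Mayer--Vietoris and a direct limit argument.

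From this setup the three conclusions follow in turn. If $L$ is $R$-acyclic, each cone attachment preserves $R$-acyclicity, so $\tilde{X}_t$ is $R$-acyclic, and the free cocompact $\Gamma$-action on $\tilde{X}_t$ witnesses $\Gamma \in \FH(R)$. If $L$ is simply connected, the same analysis applied to $\pi_1$ (each coning-off along a simply connected subcomplex preserves fundamental groups, by Van Kampen) shows $\tilde{X}_t$ is simply connected, so cocompactness of the $\Gamma$-action yields a finite presentation of $\Gamma$. If $L$ is not $R$-acyclic, a non-trivial homology class in $L$ propagates, via the $\Z$-periodic structure of $\tilde{f}$, into infinitely many independent classes in the Morse-theoretic filtration, which translates into an obstruction to a finite-type projective resolution of $R$ over $R\Gamma$, and so gives $\Gamma \notin \FP(R)$.

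The main obstacle is the third statement. Ruling out $\FP(R)$ is strictly stronger than exhibiting non-$R$-acyclic level sets, and requires turning the persistence of the non-trivial $R$-homology of $L$ into an obstruction at the level of projective resolutions---typically by showing that $H_d(\Gamma; R\Gamma)$ fails to be finitely generated in the top dimension $d = \dim L$, using that the Morse-theoretic filtration of $\tilde{X}$ produces a cofinal family of cycles reflecting the non-acyclicity of $L$. Once the link identification $\mathrm{lk}_\uparrow(v) \cong \mathrm{lk}_\downarrow(v) \cong L$ is in place, the first two points reduce to essentially routine Morse-theoretic bookkeeping.
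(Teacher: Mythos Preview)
The paper does not give its own proof of this theorem: it is stated as a summary of Bestvina--Brady \cite{MR1465330} and used as a black box. Your proposal is a faithful outline of the original Bestvina--Brady argument (cube complex, $\CAT(0)$ universal cover, PL Morse function to $\R$, identification of ascending and descending links with $L$, and Morse-theoretic analysis of level sets), so there is nothing to compare against in the paper itself. Your sketch of the first two bullets is accurate; for the third, your description of the obstruction is in the right spirit, though the precise Bestvina--Brady argument works by showing that a non-trivial class in $\tilde{H}_{n-1}(L;R)$ forces $H_n$ of the sublevel sets to be non-finitely generated over $R\Gamma$, rather than literally computing $H_d(\Gamma;R\Gamma)$ in the top dimension.
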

Since $L$ is simply connected and $\Q$-acyclic, but not $\Z$-acyclic,
the corresponding $\Gamma$ is finitely presented and $\FH(\Q)$, but
not $\FP(\Z)$.

The fact that $\Gamma \in \FH(\Q)$ means there is a finite complex $K$
with $\pi_1 K = \Gamma$ and $\bar{H}_\star(\tilde{K};\Q) = 0$; it is
to this complex $K$ that we apply the reflection group trick.

\subsubsection{An acyclic reflection group trick}
\label{subsubsection:reflection-group-trick}
\label{subsubsection:acyclic-reflection-group-trick}

Mike Davis introduced his reflection group trick in \cite{MR690848};
his excellent book \cite{MR2360474} is a great introduction to the technique.

\nocite{MR350744}
\nocite{MR248802}

Since $K$ is a finite complex, there is an embedding $K
\hookrightarrow \R^N$ for some $N$; a \defnword{regular neighborhood}
of $K \subset \R^N$ is manifold $N$ with boundary $\partial N$.
Observe that $N$ deform retracts to $K$.  An introduction to the
theory of regular neighborhoods can be found in \cite{MR350744,MR248802}.

The reflection group trick uses a Coxeter group to glue together
copies of $N$, transforming the manifold with boundary $N$ to a closed
manifold $W$.  We now describe how the copies of $N$ are glued
together.  Choose a flag triangulation $L$ of $\partial N$; let $G$ be
the right-angled Coxeter group associated to this flag triangulation.
For each vertex $v \in L$, let $D_v$ be the star of $v$ in the
barycentric subdivision $L'$ of $L$.  Copies of $N$ will be glued
along the ``mirrors'' $D_v$; specifically, define
$$
\tilde{W} = (N \times G) / \sim
$$
where $(x,g) \sim (x,gh)$ whenever $x \in D_h$.  Choose a finite
index torsion-free subgroup $G'$ of $G$, and let $W = \tilde{W} / G'$.
An application of Mayer-Vietoris proves
\begin{proposition}
$W$ is a closed manifold, with $\Q$-acyclic universal cover.
\end{proposition}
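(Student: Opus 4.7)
\emph{Plan.} I would establish the manifold structure and the rational acyclicity of the universal cover separately, using the standard theory of Davis' reflection group trick. For the manifold property, I would analyze $\tilde W$ locally. Interior points of chambers clearly have Euclidean neighborhoods. At a boundary point $x\in\partial N$, let $S(x)=\{v\in L^{(0)}:x\in D_v\}$; the combinatorics of the $D_v$'s (as stars in the barycentric subdivision) forces $S(x)$ to span a simplex of $L$, so that $\langle S(x)\rangle\subset G$ is the abelian group $(\Z/2)^{|S(x)|}$, a finite reflection group. The identification $(x,g)\sim(x,gh)$ then glues $2^{|S(x)|}$ copies of a half-ball neighborhood of $x$ in $N$ via this finite group, producing a Euclidean ball by the standard mirror-manifold local model (see Davis \cite{MR2360474}). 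Since $G'$ is torsion-free of finite index in $G$, it acts freely on $\tilde W$, so $W=\tilde W/G'$ is a closed manifold.

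For $\Q$-acyclicity, I would observe that the universal cover $\widehat W\to W$ is realized by the same basic construction, but with $N$ replaced by $\tilde N$, the universal cover of $N$, carrying the pulled-back mirror structure. Since $N\simeq K$ and $\tilde K$ is $\Q$-acyclic by the Bestvina--Brady step, each chamber $\tilde N$ has trivial reduced $\Q$-homology. Moreover, for any nonempty $S\subset L^{(0)}$ spanning a simplex of $L$, the intersection $\bigcap_{v\in S}D_v\subset\partial N$ is contractible: it is the order complex of the upper interval above $S$ in the poset of simplices of $L$, which has a least element. I would then run a Mayer--Vietoris argument over the chamber decomposition of $\widehat W$, filtering by attaching copies of $\tilde N$ one chamber at a time along preimages of unions of mirrors and propagating $\Q$-acyclicity at each stage using $\Q$-acyclicity of the new chamber together with $\Q$-acyclicity of the mirror gluing locus.

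The main obstacle will be verifying that the mirror gluing loci in $\tilde N$ are themselves $\Q$-acyclic. This reduces, via a further Mayer--Vietoris inside $\tilde N$, to showing $\Q$-acyclicity of the preimage in $\tilde N$ of each contractible mirror intersection in $\partial N$. The concern is that $\pi_1(\partial N)\to\pi_1(N)=\Gamma$ need not be injective, so the preimage is a disjoint union of covers whose components one must show are individually contractible. Once this covering-space input is in hand, the two-level Mayer--Vietoris assembles $\Q$-acyclic chambers and $\Q$-acyclic mirror pieces into the desired $\Q$-acyclicity of $\widehat W$.
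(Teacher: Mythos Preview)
Your approach is correct and coincides with the paper's: the paper offers no proof beyond the single sentence ``An application of Mayer--Vietoris proves'' the proposition, and your outline is precisely a fleshed-out version of that Mayer--Vietoris argument (local analysis for the manifold claim, chamber-by-chamber assembly for $\Q$-acyclicity).

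The obstacle you flag, however, is not genuine. You already observed that each mirror intersection $\bigcap_{v\in S}D_v\subset\partial N$ is contractible, hence simply connected. The restriction of the covering $\tilde N\to N$ to a simply connected subspace is trivial, so the preimage of $\bigcap_{v\in S}D_v$ in $\tilde N$ is a disjoint union of homeomorphic copies, each contractible. The behavior of $\pi_1(\partial N)\to\pi_1(N)$ is irrelevant here; only $\pi_1\bigl(\bigcap_{v\in S}D_v\bigr)=1$ matters, and that is already in hand. With this, your inner Mayer--Vietoris shows each gluing locus in $\tilde N$ is contractible (its nerve is a simplex and all pieces and intersections are contractible), and your outer Mayer--Vietoris then propagates $\Q$-acyclicity across chambers as you describe.
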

Additionally, $\pi_1 W = G' \rtimes \Gamma$ is torsion-free, since
$G'$ and $\Gamma$ are both torsion-free (the former by assumption, the
latter because it is a subgroup of an Artin group).

\begin{proposition}
  The space $B\pi_1 W$ does not have the homotopy type of a finite
  complex.
\end{proposition}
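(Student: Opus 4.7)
The plan is to argue by contradiction. Suppose $B\pi_1 W$ is homotopy equivalent to a finite CW complex $Y$; then $\pi_1 W$ is of type $\F$, and in particular of type $\FP(\Z)$. The strategy is to transfer this finiteness to the Bestvina--Brady subgroup $\Gamma$ and obtain a direct contradiction with $\Gamma \not\in \FP(\Z)$ established in the previous subsection.

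The key structural input is that $\Gamma = \pi_1 N$ is a retract of $\pi_1 W$. This is essentially built into the reflection group trick. Indeed, the projection $\tilde{W} = (N \times G)/\sim \; \to N$ defined by $[x,g] \mapsto x$ is well defined (the identifications $(x,g)\sim(x,gh)$ preserve the first coordinate) and invariant under the $G$-action on the second factor; it therefore descends to a map $W = \tilde{W}/G' \to N$ which splits the inclusion $N \hookrightarrow W$ given by $x \mapsto [x,1]$. On fundamental groups this produces a factorization $\Gamma \hookrightarrow \pi_1 W \twoheadrightarrow \Gamma$ whose composition is the identity, exhibiting $\Gamma$ as a retract of $\pi_1 W$.

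With the retract in hand, a standard argument completes the proof: finiteness of type $\FP$ is inherited by retracts. Concretely, the retraction $\pi_1 W \twoheadrightarrow \Gamma$ makes $H^*(\Gamma;M)$ a direct summand of $H^*(\pi_1 W;M)$ for every $\Gamma$-module $M$ pulled back to $\pi_1 W$, and the $\FP$ condition (commutation of cohomology with directed colimits) is preserved under passage to direct summands. Hence $\pi_1 W \in \FP(\Z)$ forces $\Gamma \in \FP(\Z)$, contradicting the Bestvina--Brady theorem, which gives $\Gamma \not\in \FP(\Z)$ because $L$ is not $\Z$-acyclic. Therefore $B\pi_1 W$ cannot be homotopy equivalent to a finite complex.

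I do not anticipate any real obstacle. The only technical points are checking that the spatial retraction $W \to N$ really exists (a routine unpacking of the equivalence relation defining $\tilde{W}$, together with the observation that the torsion-freeness of $G'$ prevents different copies of the chamber $N \subset \tilde{W}$ from being identified in $W$) and citing the correct inheritance fact for retracts; both are standard.
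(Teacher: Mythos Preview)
Your proposal is correct and follows essentially the same approach as the paper: both exploit the retraction of $\pi_1 W$ onto $\Gamma$ (the paper states this algebraically via $\pi_1 W = G' \rtimes \Gamma$, while you construct it from the spatial retraction $W \to N$ built into the reflection group trick) and then argue that if $B\pi_1 W$ were finite, $\Gamma$ would be $\FP(\Z)$, contradicting the Bestvina--Brady output. The only cosmetic difference is that the paper phrases the inheritance step as ``$B\Gamma$ is finitely dominated, hence $\Gamma \in \FP(\Z)$,'' whereas you invoke the cohomological characterization of $\FP$; both are standard.
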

\begin{proof}
The group $\pi_1 W = G' \rtimes \Gamma$ retracts onto $\Gamma$, and
so, $B\pi_1 W$ retracts onto $B\Gamma$.  If $B\pi_1 W$ had
the homotopy type of a finite complex, then $B\Gamma$ would be a
finitely dominated complex, and so $\Gamma \in \FP(\Z)$.  But $\Gamma$
was constructed above (using Bestvina--Brady Morse theory) so that
$\Gamma\not\in\FP(\Z)$.
\end{proof}
The existence of such a group $W$ proves
Theorem~\ref{theorem:torsion-free-counterexample}, answering
Question~\ref{question:davis} in the negative.

\section{Modules over categories}
\label{section:modules-over-categories}

\newcommand{\Spaces}{\category{Spaces}}
\newcommand{\PointedSpaces}{\category{PointedSpaces}}
\newcommand{\FindomSpaces}{\category{FindomSpaces}}
\newcommand{\Groupoids}{\category{Groupoids}}
\newcommand{\Cat}{\category{Cat}}
\newcommand{\Rings}{\category{Rings}}
\newcommand{\AbGroups}{\category{AbGroups}}
\newcommand{\Complexes}[1]{\category{Cplx}\left(#1\right)}
\newcommand{\Groups}{\category{Groups}}
\newcommand{\Modules}[1]{\mbox{$#1$-$\category{Mod}$}}
\newcommand{\Sets}{\category{Sets}}
\newcommand{\CC}[1]{\mbox{$\mathcal{C}$-$#1$}}

We work in the framework used by Davis--L\"uck's of ``spaces over a
category'' \cite{MR1659969}, using the equivariant algebraic
$K$-theory developed by L\"uck \cite{MR1027600}.  After setting up
this framework, we will describe an instant finiteness obstruction in
section~\ref{subsection:instant}; this obstruction lies in $K_0(
\Groupoids \downarrow \Modules{R} )$, in contrast to the ``usual''
obstruction which lies in $K_0(R\Gamma)$.  The advantage to
considering the refined obstruction in $K_0( \Groupoids \downarrow
\Modules{R} )$ is that it can be computed in terms of Euler
characteristics of components of the fixed sets, as we'll see in
section~\ref{subsection:sufficient-condition}.

\subsection{Categories over categories}

\begin{definition}
\label{definition:category-over-c}
Let $\mathcal{C}$ be a small category, and $\Cat$ any category; we
define a category of diagrams $\CC{\Cat}$ as follows:
\begin{itemize}
\item $\Obj \CC{\Cat}$ consists of functors $F : \mathcal{C} \to \category{Cat}$, and
\item given two such functors $F$ and $G$, the morphisms $\Hom_{\CC{\Cat}} \left( F, G \right)$ are the natural transformations from $F$ to $G$.
\end{itemize}
\end{definition}
This is the \defnword{functor category} and is usually denoted
$\category{Cat}^{\mathcal{C}}$; we use the alternate notation
$\CC{\Cat}$, evoking the equivariant notation as in ``$G$-spaces.''

\begin{example}
The category $\Spaces$ is the category of compactly generated topological
spaces; an object in $\CC{\Spaces}$ is called a (covariant)
$\mathcal{C}$-space; a contravariant $\mathcal{C}$-space is a
covariant $\mathcal{C}^{\op}$-space.  We likewise have
$\CC{\AbGroups}$ and $\CC{\Modules{R}}$, which form abelian categories
and for which W.~L\"uck has developed homological algebra
\cite{MR1027600}.
\end{example}

Any functor $F : \category{A} \to \category{B}$ induces
$$
\CC{F} : \CC{\category{A}} \to \CC{\category{B}}
$$
by sending $A : \mathcal{C} \to \category{A}$ to $\CC{F}(A) = F \circ
A$.

For instance, the fundamental groupoid functor $\Pi : \Spaces \to
\Groupoids$ induces
$$
\CC{\Pi} : \CC{\Spaces} \to \CC{\Groupoids}.
$$
At times, however, we would like to talk about the category of
$\mathcal{C}$-spaces, for a varying small category $\mathcal{C}$.
This desire is behind the following definition.
\begin{definition} 
Let $\category{A}$ and $\category{B}$ be categories, with
$\category{A}$ a subcategory of $\Cat$, the category of small
categories.  Then the category
\[
\category{A} \downarrow \category{B}
\]
has as objects the functors $F : A \to \category{B}$, for $A$ an
object of $\category{A}$; in other words, an object of the category
$\category{A} \downarrow \category{B}$ consists of a choice of an
object $A \in \category{A}$, and a functor from $A$ to $\category{B}$.

A morphisms in $\Hom_{\category{A} \downarrow \category{B}}(F : A \to \category{B}, F' : A' \to \category{B})$
consists of a functor $H : A \to A'$ with a natural transformation from
$F$ to $F' \circ H$.

Although it will not be important in the sequel, note that
$\category{A}$ is a $2$-category (meaning a category enriched over
$\Cat$), and $\category{A} \downarrow \category{B}$ is likewise a
$2$-category.
\end{definition}

\subsection{Balanced products}

A construction well-known to category theorists---that of a
\textit{coend}---gives a natural transformation from a bifunctor
$\mathcal{C}^{\op} \times \mathcal{C} \to \Cat$ to a constant functor
\cite{MR354798}.  We apply this in the case of $\Cat$, a monoidal
category, to combine a contravariant and covariant
$\mathcal{C}$-object over $\Cat$ into an object of $\Cat$.

\begin{definition}
  Let $\Cat$ be a monoidal category with product $\times$; let $A$ and
  $B$ be contravariant and covariant $\mathcal{C}$-objects,
  respectively. Then the \defnword{balanced product} of $A$ and $B$,
  written $A \times_{\mathcal{C}} B$, is
$$
\bigsqcup_{c \in \Obj \mathcal{C}} A(c) \times B(c) / \sim
$$
where $(xf,y) \sim (x,fy)$ for $x \in A(d)$, $y \in B(c)$, and $f \in
\Hom(c,d)$.
\end{definition}
We will be using balanced products in the context of spaces (under
cartesian product of spaces) and modules (under tensor product of
modules).  At first, balanced products may seem too abstract, but balanced products
(and coends more generally) are abstractions of a better-known
construction: geometric realization.
\newcommand{\SimplexCategory}{\mathbf{\Delta}}
\begin{example}
\label{example:geometric-realization}
  Define $\SimplexCategory$, the simplicial category (see
  \cite{MR222892}), where
\begin{itemize}
\item $\Obj \SimplexCategory$ consists of totally ordered finite sets, and
\item $\Hom_{\SimplexCategory}(A,B)$ consists of order-preserving
  functions from $A$ to $B$.
\end{itemize}
Further define $\Delta$ to be the $\SimplexCategory$-space, sending a totally ordered finite $A$ to
$$
\Delta(A) = \mbox{$(|A| - 1)$-simplex},
$$
and an order-preserving function to the inclusion of simplices.

A \defnword{simplicial space} is a functor $X : \SimplexCategory^{\op}
\to \category{Spaces}$, i.e., an object of
$\SimplexCategory^{\op}$-$\Spaces$.  The balanced product of a
simplicial space $X$ with $\Delta$ (written $X
\times_{\SimplexCategory} \Delta$), is the \defnword{geometric
  realization} of the simplicial space $X$.
\end{example}

\subsection{Orbit category}

\begin{definition}
  Define the \defnword{orbit category} of a group $G$, written $\Or(G)$, as follows:
\begin{itemize}
\item $\Obj \Or(G) = \{ G/H : \mbox{$H$ a subgroup of $G$} \}$,
\item $\Hom_{\Or(G)}(G/H, G/K)$ is the set of $G$-maps between the $G$-sets $G/H$ and $G/K$.
\end{itemize}
\end{definition}
Naturally associated to a $G$-space, there are both contravariant
and covariant $\Or(G)$-spaces.
\begin{example}
Let $X$ be a (left) $G$-space; there is a contravariant $\Or(G)$-space
$$
G/H \mapsto X^H,
$$
with $G/H \to G/H'$ sent to $X^{H'} \subset X^{H}$.
Associated to $X$, there is also a covariant $\Or(G)$-space
$$
G/H \mapsto X/H,
$$
with $G/H \to G/H'$ sent to $X/H \to X/H'$.
\end{example}
In fact, the reverse is possible: given a contravariant
$\Or(G)$-space, we can recover a $G$-space.
\begin{proposition}
\label{proposition:recognize-spaces-with-actions}
A contravariant $\Or(G)$-space is (naturally) a left $G$-space.
\end{proposition}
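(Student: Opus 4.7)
The plan is to recover a left $G$-space from a contravariant functor $X : \Or(G)^{\op} \to \Spaces$ by evaluating at the free orbit $G/1$, with the $G$-action coming from the $G$-equivariant self-maps of $G/1$.

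First, I would identify $\Hom_{\Or(G)}(G/1, G/1)$ explicitly. Any $G$-map $f : G/1 \to G/1$ is determined by $f(e \cdot 1) = g \cdot 1$, and equivariance forces $f(h \cdot 1) = h g \cdot 1$, so $f$ is right multiplication $R_g$ by $g$. Composing gives $(R_g \circ R_{g'})(h) = h g' g$, i.e.\ $R_g \circ R_{g'} = R_{g' g}$, so $g \mapsto R_g$ is an anti-isomorphism $G \to \Hom_{\Or(G)}(G/1, G/1)$ realizing the target as $G^{\op}$.

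Second, I would define the action on $X(G/1)$ by $\rho(g) := X(R_g)$. Because $X$ is contravariant, $X(R_{gg'}) = X(R_{g'} \circ R_g) = X(R_g) \circ X(R_{g'})$, so $\rho(gg') = \rho(g) \circ \rho(g')$: the two reversals (from the anti-isomorphism and from contravariance) cancel, producing an honest group homomorphism $\rho : G \to \mathrm{Aut}(X(G/1))$, i.e.\ a left $G$-action on $X(G/1)$.

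Finally, a natural transformation $\eta : X \to X'$ of contravariant $\Or(G)$-spaces has a component $\eta_{G/1} : X(G/1) \to X'(G/1)$ which commutes with $X(R_g)$ and $X'(R_g)$ via the naturality square at $R_g$, and so is $G$-equivariant. This makes $X \mapsto X(G/1)$ a functor, which is the ``naturally'' in the statement. The only real obstacle is bookkeeping the three reversals---contravariance, opposite group, and anti-isomorphism---but they collapse cleanly once the identification $\Hom_{\Or(G)}(G/1, G/1) \cong G^{\op}$ is set up carefully.
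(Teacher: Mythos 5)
Your proof is correct, but it takes a different route than the paper. You evaluate the contravariant $\Or(G)$-space $X$ at the free orbit $G/1$ and observe that $\operatorname{End}_{\Or(G)}(G/1) \cong G^{\op}$, so the contravariance reverses the reversal and $X(G/1)$ carries a genuine left $G$-action $g\cdot x = X(R_g)(x)$. The paper instead forms the balanced product $X \times_{\Or(G)} \nabla$, where $\nabla$ is the covariant $\Or(G)$-space $G/H \mapsto G/H$, and lets $g$ act by $\id \times_{\Or(G)} L_g$. These are in fact the same construction: $\nabla$ is the covariant representable functor $\Hom_{\Or(G)}(G/1,-)$, so by the co-Yoneda lemma $X \times_{\Or(G)} \nabla \cong X(G/1)$, and chasing the relation $(x,g) \sim (X(R_g)(x),1)$ recovers exactly your action. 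What the paper's phrasing buys is uniformity: the balanced product is the operation used throughout (it reappears immediately in $X \times_{\Or(G)} B\Or(G)$ and in the Eilenberg--Zilber-style formula for $\Wall$), and describing the extraction of the underlying $G$-space as yet another balanced product fits that framework. Your version is more elementary and makes the left-versus-right bookkeeping completely explicit, which the paper leaves implicit; it is a useful sanity check on the signs, but it hides the coend pattern that the rest of the section relies on.
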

\begin{proof}
  The construction is formally similar to geometric realization (see
  Example~\ref{example:geometric-realization}).

  Suppose $X$ is a contravariant $\Or(G)$-space.  Let $\nabla$ be the
  covariant $\Or(G)$-space given by sending $G/H$ to itself, that is,
  to the finite set with the discrete topology.  Then
$$
X \times_{\Or(G)} \nabla
$$
is a (left) $G$-space.  Specifically, $g \in G$ acts on $X
\times_{\Or(G)} \nabla$ by the map $\id \times_{\Or(G)} L_g$ where
$L_g : G/H \to G/H$ is left multiplication by $g$.
\end{proof}

\subsection{$K$-theory}

An object in $\Groupoids \downarrow \Modules{R}$ is an
``$R[G]$-module'' for some groupoid $G$; we define certain (full)
subcategories of $\Groupoids \downarrow \Modules{R}$, corresponding to
finitely generated free and finitely generated projective
$R[G]$-modules.  We will speak of both contravariant and covariant
$R[G]$-modules.

\begin{definition}
  A \defnword{complex} in $\Groupoids \downarrow \Modules{R}$ is a
  collection of such modules $M_i$ (with $i\in \Z)$ and maps $d_i :
  M_i \to M_{i-1}$.  We say the complex is \defnword{bounded} if all
  but finitely many of the modules are zero.

  Write $\Complexes{ \Groupoids \downarrow \Modules{R} }$ for the
  category of complexes of finitely generated projective $R$-modules
  over a groupoid; maps between complexes are \defnword{chain maps}.
\end{definition}

As is usually the case, ``free'' is adjoint to ``forgetful'' (i.e.,
the forgetful functor from $\Groupoids \downarrow \Modules{R}$ to
$\Groupoids \downarrow \Sets$).
\begin{definition}[see page 167, \cite{MR1027600}]
  A module $M$ in $\Groupoids \downarrow \Modules{R}$ is a
  \defnword{free module} with basis $B \subset M$, an object in
  $\Groupoids \downarrow \Sets$, if, for any object $N$ in $\Groupoids
  \downarrow \Modules{R}$ and map $f : B \to N$, there is a unique
  morphism $F : M \to N$ extending $f$.
\end{definition}

In addition to free modules with basis $B$, we can speak about modules
generated by a particular subset.
\begin{definition}[see page 168, \cite{MR1027600}]
  Suppose $M$ is an object in the category $\Groupoids \downarrow \Modules{R}$, and
  $S$ is a subset (i.e., an object in $\Groupoids \downarrow \Sets$).
  Then the \defnword{span} of $S$ is the smallest module containing $S$, namely,
$$
\Span S = \bigcap \{ N : S \subset N \mbox{ and $N$ is a submodule of $M$  } \}.
$$
If $S$ is a finite set (i.e., finite over the indexing category,
meaning $S(g)$ is a finite set for each object $g$ in the groupoid),
we say that $\Span S$ is \defnword{finitely generated}.
\end{definition}

\begin{definition}[see page 169, \cite{MR1027600}]
  A module $P$ in $\Groupoids \downarrow \Modules{R}$ is
  \defnword{projective} if either of the following equivalent
  conditions holds:
\begin{itemize}
\item Each exact sequence $0 \to M \to N \to P \to 0$ splits.
\item $P$ is a direct summand of a free module.
\end{itemize}
\end{definition}

Having studied these modules, we can define an appropriate $K$-theory
for the category $\Groupoids \downarrow \Modules{R}$, via Waldhausen
categories \cite{MR802796} as in \cite{MR1027600}.

This $K$-theory is the correct receiver for the Euler characteristic.
\begin{definition}
  The \defnword{Euler characteristic} $\chi$ of a bounded complex $(M_i,d_i)$ in
  $\Complexes{\Groupoids \downarrow \Modules{R}}$ is 
$$
\chi \left( \cdots \to M_0 \to \cdots \right) = \sum_{i \in \Z} (-1)^i [M_i] \in K_0( \Groupoids \downarrow \Modules{R} ).
$$
\end{definition}

\subsection{Chain complex of the universal cover}

In Wall's finiteness obstruction for a space $X$, the most important
object is $\tilde{C}(X)$, the $R[\pi_1 X]$-chain complex of the
universal cover of $X$.  This is traditionally denoted by
$C_\star(\tilde{X} ; R)$, but we will write $\tilde{C}(X)$ to
emphasize the functorial nature of the construction.

However, the usual construction is insufficiently functorial:
$\tilde{C}$ transforms a space $X$ into a chain complex over a ring
that depends on the group $\pi_1 X$; consequently, it is not clear
what the target category of $\tilde{C}$ ought to be.  Worse, only
basepoint preserving maps $X \to X$ induce endomorphisms of
$\tilde{C}(X)$.

The definition of $\category{A} \downarrow \category{B}$ is exactly
what we need to define the target of the functor $\tilde{C}$, and by
using the fundamental groupoid instead of the fundamental group, we
avoid the basepoint issue: \textit{any} self-map of $X$ will induce a
self-map of $\tilde{C}(X)$.

Before we can define $\tilde{C}$, we define the \defnword{universal
  cover functor}.  The functor $\tilde{-} : \Spaces \to \Groupoids
\downarrow \Spaces$ sends a space $X$ to the functor $\tilde{X} : \Pi
X \to \Spaces$.  This latter functor sends a object in $\Pi X$, which
is just a point $x \in X$, to the universal cover of $X$ using $x$ as
the base point.

The functor $C : \Spaces \to \Complexes{\Modules{R}}$ sends a space to
its singular $R$-chain complex.  Note that this induces a functor
$$
\Groupoids \downarrow C : \Groupoids \downarrow \Spaces \to \Groupoids \downarrow \Complexes{\Modules{R}}
$$
Not too surprisingly, we compose $\tilde{-}$ and $\Groupoids \downarrow C$.
\begin{definition}[See page 259, \cite{MR1027600}]
  The functor
$$
\tilde{C} : \Spaces \to \Complexes{\Groupoids \downarrow \Modules{R}}
$$
sends a space $X$ to $C(\tilde{X})$.

\end{definition}
Note that there is a natural map
$$
\Groupoids \downarrow \Complexes{\Modules{R}} \to \Complexes{\Groupoids \downarrow \Modules{R}}.
$$
As a result of the functoriality of $\tilde{C}$, we can apply
$\tilde{C}$ over a small category $\mathcal{C}$

to get
\begin{align*}
\CC{\tilde{C}} : \CC{\Spaces} &\to \CC{\Complexes{\Groupoids \downarrow \Modules{R}}} \\
                              &\to \Complexes{\mathcal{C} \downarrow \left( \Groupoids \downarrow \Modules{R} \right)}.
\end{align*}

\subsection{Instant finiteness obstruction}
\label{subsection:instant}

Our goal is to define maps
\begin{align*}
\Wall &: \FindomSpaces \to K_0( \Groupoids \downarrow \Modules{R} ), \\
\ReducedWall &: \FindomSpaces \to \tilde{K}_0( \Groupoids \downarrow \Modules{R} ),
\end{align*}
so that $\ReducedWall \neq 0$ obstructs an $R$-finitely dominated
space from being $R$-homotopy equivalent to a finite complex.  There
are a few terms that need to be defined.

Here, $\FindomSpaces$ is built from a full subcategory of $\Spaces$,
consisting of those spaces which are $R$-\defnword{finitely
  dominated}, but the choice of domination is part of the data.
\begin{definition}
A space $Y$ is $R$-\defnword{dominated} by $X$ if there are maps
$$
Y \stackrel{i}{\longrightarrow} X \stackrel{r}{\longrightarrow} Y
$$
with $r \circ i : Y \to Y$ an $R$-homotopy equivalence.

Further, a space is $R$-\defnword{finitely dominated} by $X$ if $X$ is a finite
complex.
\end{definition}
Whenever we speak of an $R$-homotopy equivalence, we really mean an
$R[\pi_1]$ equivalence---i.e., the induced map $\tilde{C}(Y) \to
\tilde{C}(Y)$ is chain homotopic to the identity.

Ranicki has defined an \textit{instant finiteness obstruction}.  His
algebraic framework remains applicable for complexes of modules over a
category.  In particular, Proposition~3.1 of \cite{MR815431}
associates an element of $K_0( \Groupoids \downarrow \Modules{R} )$ to
a finite domination; this defines the maps $\Wall$ and $\ReducedWall$
for a finitely dominated space, and Proposition~3.2 of \cite{MR815431}
yields

\begin{proposition}
  If a space $X$ is $R$-finitely dominated, and $\ReducedWall(X) = 0$,
  then $X$ is $R$-homotopy equivalent to a finite complex.
\end{proposition}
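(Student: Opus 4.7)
The plan is to reduce the statement to Ranicki's instant finiteness obstruction for bounded chain complexes in the abelian category $\Groupoids \downarrow \Modules{R}$, and then realize the resulting finite free complex geometrically along Wall's original cell-attachment pattern.

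First, I would unpack the $R$-finite domination: by hypothesis there is a finite complex $Y$ together with maps $X \xrightarrow{i} Y \xrightarrow{r} X$ whose composition is an $R$-homotopy equivalence, meaning it induces a chain homotopy equivalence on $\tilde{C}$. Applying the functor $\tilde{C}$ yields a chain homotopy retraction of $\tilde{C}(X)$ off the bounded complex of finitely generated free modules $\tilde{C}(Y)$ inside $\Complexes{\Groupoids \downarrow \Modules{R}}$. This exhibits $\tilde{C}(X)$ as a finitely dominated object in Ranicki's sense. Proposition~3.1 of \cite{MR815431}, whose proof only uses formal properties of the ambient abelian category (kernels, mapping cylinders, and chain complexes of finitely generated projectives)—all of which hold in $\Groupoids \downarrow \Modules{R}$ thanks to L\"uck's development in \cite{MR1027600}—therefore applies and produces a bounded complex $P_\bullet$ of finitely generated projective modules chain equivalent to $\tilde{C}(X)$, whose alternating class is $\Wall(X)\in K_0(\Groupoids \downarrow \Modules{R})$.

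Second, I would feed in the vanishing hypothesis. The condition $\ReducedWall(X) = 0$ says that $\sum (-1)^i [P_i]$ vanishes in $\tilde{K}_0$, i.e.\ that $\bigoplus_{i\text{ even}} P_i$ and $\bigoplus_{i\text{ odd}} P_i$ become isomorphic after adding a common finitely generated free summand. A stabilization argument (Proposition~3.2 of \cite{MR815431}), carried out term by term, then upgrades $P_\bullet$ to a bounded complex $F_\bullet$ of finitely generated \emph{free} modules still chain equivalent to $\tilde{C}(X)$.

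The main obstacle is the final step: producing an actual finite CW complex $Z$ with $\tilde{C}(Z) \simeq F_\bullet$ and an $R$-homotopy equivalence $Z \to X$. Here I would mimic Wall's inductive construction, building $Z$ skeleton by skeleton starting from a wedge of circles realizing the fundamental groupoid of $X$. At stage $n$ each basis element of $F_n$ is realized by attaching an $n$-cell via an attaching map read off from the boundary operator of $F_\bullet$; this is possible because a finitely generated free module over the fundamental groupoid corresponds precisely to an equivariant wedge of $n$-cells over the previously constructed skeleton, and because free-ness lets us lift the algebraic boundary to an actual continuous map. Finite generation of each $F_n$ and the boundedness of $F_\bullet$ ensure the process terminates, while functoriality of $\tilde{C}$ identifies $\tilde{C}(Z)$ with $F_\bullet$. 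The map $Z \to X$ built alongside the construction induces a chain equivalence on $\tilde{C}$ and is therefore an $R$-homotopy equivalence.
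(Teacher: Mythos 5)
Your first three steps---unpacking the domination, applying Ranicki's Proposition~3.1 to get a bounded complex of finitely generated projectives, and using Proposition~3.2 together with the vanishing of $\ReducedWall$ to trade projectives for frees---match the paper exactly, and that is as far as the paper's own argument actually goes.

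The gap is in your final step, the Wall-style geometric realization. Over $\Z$, Wall's inductive cell attachment works because the cellular chain groups of a CW complex are free $\Z[\pi]$-modules with cells as basis, and the Hurewicz theorem lets you lift a $\Z[\pi]$-boundary map to an actual attaching map in $\pi_{n-1}$ of the previously built skeleton. Over $R = \Q$ this breaks down: the boundary operators of your free complex $F_\bullet$ are $\Q[\pi]$-module maps, and there is no general way to clear denominators and realize them as $\Z[\pi]$-cellular boundary maps of an honest CW complex. Hurewicz over $\Q$ only identifies $\pi_n \otimes \Q$ with rational homology, so a basis element of $F_n$ need not lift to a continuous attaching map at all. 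If your realization argument worked as written, it would show $\FL(\Q) \Rightarrow \FH(\Q)$ for every finitely presented group, which is not known. The paper is careful to concede this point explicitly: right after the proposition it says ``Or rather, we can only show that $\tilde{C}(X)$ is chain equivalent to a complex of finitely generated free $R$-modules,'' and then it bridges the algebra-to-geometry gap by citing Leary's Theorem~9.4, which proves $\FL(\Q) \Leftrightarrow \FH(\Q)$ under the additional hypothesis that the group is of \emph{finite type}. You should replace your last paragraph with that appeal (and note that the finite-type hypothesis is being used) rather than claiming a direct geometric realization.
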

Or rather, we can only show that $\tilde{C}(X)$ is chain equivalent to
a complex of finitely generated free $R$-modules.  But in many cases, this
is enough: Leary (in Theorem 9.4 of \cite{MR1912650}), shows that if $G$
is a group of finite type (i.e., $BG$ has finitely many cells in each
dimension), then $G$ is $\FL(\Q)$ if and only if $G$ is $\FH(\Q)$.
This means that finite domination and the above algebra suffices to
get the geometry.

What we have done thus far for spaces is valid for
$\mathcal{C}$-spaces.  For instance, a $\mathcal{C}$-space $Y$ is said
to be $R$-finitely dominated if there is a finite $\mathcal{C}$-space
$X$, (meaning for each $c \in \mathcal{C}$, the space $X(c)$ is a
finite complex), and maps $Y \stackrel{i}{\longrightarrow} X
\stackrel{r}{\longrightarrow} Y$ with an $R$-homotopy equivalence $r
\circ i$.
\begin{proposition}
  If $Y$ and $Y'$ are $R$-finitely dominated contra- and covariant
  (respectively) $\mathcal{C}$-spaces and $\mathcal{C}$ is finite, then
$$
Y \times_{\mathcal{C}} Y'
$$
is an $R$-finitely dominated space.
\end{proposition}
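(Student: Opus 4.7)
The plan is to take $X \times_{\mathcal{C}} X'$ as the candidate finite dominator, where $Y \xrightarrow{i} X \xrightarrow{r} Y$ realizes $Y$ as $R$-finitely dominated (by the finite $\mathcal{C}$-space $X$) and similarly $Y' \xrightarrow{i'} X' \xrightarrow{r'} Y'$ realizes $Y'$. There are two things to verify: that $X \times_{\mathcal{C}} X'$ is a finite complex, and that the balanced product of the two dominations is itself a finite domination of $Y \times_{\mathcal{C}} Y'$.

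For the finiteness, $X \times_{\mathcal{C}} X'$ is a quotient of
\[
\bigsqcup_{c \in \Obj \mathcal{C}} X(c) \times X'(c),
\]
a finite disjoint union (since $|\Obj\mathcal{C}| < \infty$) of finite complexes, with identifications indexed by the finitely many morphisms of $\mathcal{C}$; this endows the quotient with a finite CW structure.

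Next, applying the (bi)functor $\times_{\mathcal{C}}$ to the domination data yields
\[
Y \times_{\mathcal{C}} Y' \xrightarrow{\,i \times_{\mathcal{C}} i'\,} X \times_{\mathcal{C}} X' \xrightarrow{\,r \times_{\mathcal{C}} r'\,} Y \times_{\mathcal{C}} Y',
\]
whose composite equals $(r \circ i) \times_{\mathcal{C}} (r' \circ i')$. What remains is to show this composite is an $R$-homotopy equivalence, i.e., that its action on $\tilde{C}$ is chain homotopic to the identity. My strategy is to reduce to a chain-level fact: via a cellular Eilenberg--Zilber argument for coends, one obtains a natural chain equivalence
\[
\tilde{C}(Y \times_{\mathcal{C}} Y') \simeq \tilde{C}(Y) \otimes_{\mathcal{C}} \tilde{C}(Y'),
\]
after replacing $Y$ and $Y'$ by equivalent CW $\mathcal{C}$-spaces. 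The $\mathcal{C}$-equivariant chain homotopies $\tilde{C}(r \circ i) \simeq \id$ and $\tilde{C}(r' \circ i') \simeq \id$ supplied by the hypotheses then pass through $\otimes_{\mathcal{C}}$ one factor at a time (tensoring a chain homotopy with the identity on the other side is still a chain homotopy), giving a chain homotopy to the identity on the balanced product.

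The main obstacle is precisely the Eilenberg--Zilber identification: the left-hand side $\tilde{C}(Y \times_{\mathcal{C}} Y')$ lives over the fundamental groupoid of the balanced product, which is \emph{not} itself a balanced construction out of $\Pi Y$ and $\Pi Y'$, so the two sides are a priori modules over different groupoids. I would handle this by pulling $\tilde{C}(Y)$ and $\tilde{C}(Y')$ back along the canonical groupoid morphisms induced by the structure maps $Y(c) \to Y \times_{\mathcal{C}} Y'$ and $Y'(c) \to Y \times_{\mathcal{C}} Y'$, and then establishing the identification cellularly on a CW model, where it reduces to a bookkeeping statement about coends of free modules indexed by cells.
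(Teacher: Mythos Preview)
Your approach matches the paper's: it too takes $X \times_{\mathcal{C}} X'$ as the dominator and reduces to checking that this balanced product is a finite complex and that $(r \times_{\mathcal{C}} r') \circ (i \times_{\mathcal{C}} i')$ induces an $R$-equivalence on $\tilde{C}$. The paper's proof is in fact terser than yours, leaving both verifications implicit; your Eilenberg--Zilber discussion anticipates exactly the tool the paper invokes for the subsequent proposition on computing $\Wall$ of a balanced product.
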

\begin{proof}
  This is fairly straightforward: suppose $Y$ and $Y'$ are finitely
  dominated by $X$ and $X'$, respectively.  Then we have
$$
Y \times_{\mathcal{C}} Y'
\stackrel{i \times_{\mathcal{C}} i'}{\xrightarrow{\hspace*{1.5cm}}}
X \times_{\mathcal{C}} X'
\stackrel{r \times_{\mathcal{C}} r'}{\xrightarrow{\hspace*{1.5cm}}}
Y \times_{\mathcal{C}} Y'
$$
and it is enough to prove that 
$$
(r \times_{\mathcal{C}} r') \circ (i \times_{\mathcal{C}} i') : \tilde{C}(Y \times_{\mathcal{C}} Y') \to \tilde{C}(Y \times_{\mathcal{C}} Y')
$$
is an $R$-equivalence, and that $X \times_{\mathcal{C}} X'$ is a finite complex.

\end{proof}

The finiteness obstruction for a balanced product $Y
\times_{\mathcal{C}} Y'$ can be computed from the finiteness
obstructions of the terms $Y$ and $Y'$; this amounts to an equivariant
version of the Eilenberg--Zilber theorem, as in \cite{MR1697458}.
\begin{proposition}
  For $Y$ and $Y'$, finitely dominated contravariant and covariant
  $\mathcal{C}$-spaces, respectively,
$$
\Wall(Y \times_\mathcal{C} Y') = \Wall(Y) \otimes_\mathcal{C} \Wall(Y').
$$
\end{proposition}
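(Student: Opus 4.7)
The plan is to reduce the claim to an identity at the chain-complex level and then invoke an equivariant Eilenberg--Zilber theorem, paralleling the classical multiplicativity of Wall's obstruction under cartesian products.

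First, I would use Ranicki's instant finiteness obstruction to work entirely with chains. Given finite $\mathcal{C}$-dominations $(i,r)$ of $Y$ by $X$ and $(i',r')$ of $Y'$ by $X'$, applying $\tilde{C}$ produces finite chain-level dominations of $\tilde{C}(Y)$ and $\tilde{C}(Y')$ in $\Complexes{\Groupoids \downarrow \Modules{R}}$, from which $\Wall(Y)$ and $\Wall(Y')$ are read off directly via Proposition~3.1 of \cite{MR815431}. Since the instant obstruction depends only on such chain-level data, it suffices to produce a chain-level finite domination of $\tilde{C}(Y \times_{\mathcal{C}} Y')$ whose Ranicki class in $K_0(\Groupoids \downarrow \Modules{R})$ is the balanced tensor product of those of $\tilde{C}(Y)$ and $\tilde{C}(Y')$.

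Second, I would establish an equivariant Eilenberg--Zilber equivalence
$$
\tilde{C}(Y \times_{\mathcal{C}} Y') \simeq \tilde{C}(Y) \otimes_{\mathcal{C}} \tilde{C}(Y'),
$$
natural in both arguments. Objectwise (i.e.\ for each $c \in \mathcal{C}$), this is the classical Eilenberg--Zilber theorem for the singular chains on $Y(c) \times Y'(c)$; passage to the coend is compatible with the shuffle and Alexander--Whitney maps, as in \cite{MR1697458}. Applied to the dominating pair $(X \times_{\mathcal{C}} X',\, i \times_{\mathcal{C}} i',\, r \times_{\mathcal{C}} r')$ produced by the previous proposition, this yields a chain-level finite domination of $\tilde{C}(Y) \otimes_{\mathcal{C}} \tilde{C}(Y')$. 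Combining Ranicki's instant formula with bilinearity of $\otimes_{\mathcal{C}}$ on finitely generated projectives over a groupoid, and additivity of $\chi$ on bounded complexes, then yields the stated product formula in $K_0(\Groupoids \downarrow \Modules{R})$.

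The hard part will be the equivariant Eilenberg--Zilber step. Concretely, one has to identify, up to natural chain equivalence, the universal-cover functor $\widetilde{Y \times_{\mathcal{C}} Y'}$ over $\Pi(Y \times_{\mathcal{C}} Y')$ with an appropriate balanced product of the universal-cover functors $\tilde{Y}$ and $\tilde{Y}'$, and then verify that the classical shuffle and Alexander--Whitney chain equivalences assemble coherently through the coend over $\mathcal{C}$ and through the change-of-groupoid induced by $\Pi(Y) \times \Pi(Y') \to \Pi(Y \times_{\mathcal{C}} Y')$. Once this naturality is in place, the final identity at the level of $K_0$ is a formal consequence of additivity of Euler characteristic and bilinearity of $\otimes_{\mathcal{C}}$.
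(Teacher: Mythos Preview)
Your proposal is correct and follows essentially the same route as the paper: the paper's proof amounts to invoking an equivariant Eilenberg--Zilber theorem (citing \cite{MR1697458}) together with the decomposition on page~229 of \cite{MR1027600} relating the balanced tensor product of chain complexes to the balanced product of spaces, which is precisely the chain-level identification $\tilde{C}(Y \times_{\mathcal{C}} Y') \simeq \tilde{C}(Y) \otimes_{\mathcal{C}} \tilde{C}(Y')$ you describe. You have supplied more of the connective tissue (Ranicki's instant obstruction, bilinearity of $\otimes_{\mathcal{C}}$, the groupoid compatibility) than the paper does, but the underlying argument is the same.
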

This follows from the decomposition on page~229 of \cite{MR1027600},
relating balanced tensor product of chain complexes to the balanced
product of
spaces.

\section{Proof of Main Theorem}
\label{section:proof-of-main-theorem}

\subsection{A necessary condition}
\label{subsection:lefschetz}

Denote the Euler characteristics over a field $R$ by $\chi(X;R) =
\sum_i (-1)^i \dim H_i(X;R)$.  The Lefschetz Fixed Point Theorem
(\cite{MR1867354}, \cite{MR283793}) will obstruct some groups from
satisfying $\FH(R)$.
\begin{proposition}
\label{proposition:lefschetz-finiteness}

Suppose $R \supset \Q$ is a field, and that
$$
1 \to \pi \to \Gamma \to G \to 1,
$$
with $B\pi$ homotopy equivalent to a finite complex, and $G$ a finite
group.  If there exists a compact $X$ having $\pi_1 X = \Gamma$ and
$R$-acyclic $\tilde{X}$, then, for all nontrivial $g \in G$,
$$
\chi\left( \left(B\pi\right)^{\langle g \rangle} ; R\right) = 0.
$$
\end{proposition}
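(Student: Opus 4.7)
The plan is to produce a compact $G$-space $Y$ that is $R$-homologically indistinguishable from $B\pi$ but on which $G$ acts freely; two applications of the Lefschetz Fixed Point Theorem then deliver the vanishing.

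First, I would set $Y = \tilde{X}/\pi$. Because $\pi$ is normal in $\Gamma$, the quotient $G = \Gamma/\pi$ acts on $Y$, and because $G$ is finite and $X = Y/G$ is compact, $Y$ is itself compact. The first key observation is that this $G$-action is free: if $g \in G \setminus \{1\}$ fixed some $[x] \in Y$, then any lift $\tilde{g} \in \Gamma$ of $g$ would give $\tilde{g} \cdot x = p \cdot x$ for some $p \in \pi$, so $p^{-1}\tilde{g}$ would fix $x \in \tilde{X}$. Since $\Gamma$ acts freely on its universal cover as deck transformations, this forces $\tilde{g} = p \in \pi$, contradicting $g \neq 1$ in $G$. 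Hence $Y^{\langle g \rangle} = \emptyset$ for every nontrivial $g$.

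Next, I would compare homology. The Cartan--Leray spectral sequence
\[
H_p(\pi; H_q(\tilde{X}; R)) \Rightarrow H_{p+q}(Y; R)
\]
collapses, by $R$-acyclicity of $\tilde{X}$, to give $H_*(Y;R) \cong H_*(\pi;R) \cong H_*(B\pi;R)$. The step I expect to require the most care is verifying that the $G$-action on $H_*(Y;R)$ inherited from $Y = \tilde{X}/\pi$ and the $G$-action on $H_*(B\pi;R)$ induced from the given action of $G$ on $B\pi$ agree under this isomorphism; both arise from conjugation by lifts in $\Gamma$, well-defined only modulo inner automorphisms of $\pi$, but such inner automorphisms act trivially on group homology, so the two $G$-module structures coincide.

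Finally, I would invoke the Lefschetz Fixed Point Theorem twice, using that the Lefschetz number is a purely homological trace. Applied to a finite $G$-CW model of $B\pi$, the theorem gives $L(g; B\pi; R) = \chi((B\pi)^{\langle g \rangle}; R)$; applied to the compact space $Y$ with its free $G$-action, it gives $L(g; Y; R) = \chi(\emptyset; R) = 0$. The identification of $G$-modules from the previous step forces $L(g; Y; R) = L(g; B\pi; R)$, so combining the two equalities yields $\chi((B\pi)^{\langle g \rangle}; R) = 0$, as desired.
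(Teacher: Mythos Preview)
Your proposal is correct and follows essentially the same approach as the paper: both set $Y=\tilde{X}/\pi$, observe that $G$ acts freely there, identify $H_\star(Y;R)$ with $H_\star(B\pi;R)$ as $G$-modules, and combine two applications of Lefschetz. The only minor difference is that the paper obtains the $G$-module identification by asserting a $G$-equivariant map $\tilde{X}/\pi\to B\pi$ (an $R$-homology equivalence), whereas you identify both sides with group homology of $\pi$ and match the conjugation actions; these are equivalent ways of justifying the same step.
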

\begin{proof}
  The map $\tilde{X}/\pi \to B\pi$ is an $R$-homology equivalence, and
  is $G$-equivariant (though not necessarily an equivariant homotopy
  equialvence).  Consequently,
\begin{align*}
\chi\left(\left(B\pi\right)^{\langle g \rangle} ; R \right)
&= \trace \left(g_\star : H_\star(B\pi;R) \to H_\star(B\pi;R)\right) \hspace{1em}\mbox{(by Lefschetz)}\\
&= \trace \left(g_\star : H_\star(\tilde{X}/\pi;R) \to H_\star(\tilde{X}/\pi;R) \right) \hspace{1em}\mbox{(by $G$-equivariance)}\\
&= 0 \hspace{1em}\mbox{(by freeness of the $G$-action on $\tilde{X}/\pi$).}
\end{align*}
\end{proof}

Proposition~\ref{proposition:lefschetz-finiteness} is strong enough to
obstruct certain groups from satisfying $\FH(\Q)$.
\begin{example}
The group $\Z/n\Z$ acts on $\Z^n$ by permuting coordinates; $\Z/n\Z$
also acts on the kernel of the map $\Z^n \to \Z$ given by adding
coordinates.  Use the action on the kernel to define $\Gamma =
\Z^{n-1} \rtimes \Z/n\Z$.

The action of $\Z/n\Z$ on $B\Z^{n-1} = (S^1)^{n-1}$ fixes $n$ isolated
points.  Upon applying the classifying space functor $B$, the kernel
$\Z^{n-1}$ of the map $\Z^n \to \Z$ is
$$
B\Z^{n-1} = \{ (\alpha_1, \ldots, \alpha_n) \in (S^1)^{n} \mid \sum
\alpha_i = 0 \},
$$
and $\Z/n\Z$ acts on $B\Z^{n-1}$ by cycling coordinates.  So a fixed
point of the $\Z/n\Z$ is a point $(\alpha, \ldots, \alpha)$ with $n
\alpha = 0 \in S^1$.  There are $n$ such solutions, namely $\alpha =
2\pi k/n$ for $k = 0, \ldots, n-1$.  Consequently,
$$
\chi\left( \left( (S^1)^{n-1} \right)^{\Z/n\Z} \right) = n,
$$
and hence Proposition~\ref{proposition:lefschetz-finiteness} implies
that $\Gamma$ does not act freely on any $\Q$-acyclic complex.
\end{example}

\subsection{A sufficient condition}
\label{subsection:sufficient-condition}

The necessary condition given in
Proposition~\ref{proposition:lefschetz-finiteness} is not sufficient.
As we will see later, a sufficient condition requires examining the
Euler chacteristic of connected components of other subgroups, not
just the cyclic subgroups.  Here, we use the machinery from
Section~\ref{section:modules-over-categories} to prove the main
theorem.

Let $X$ be a $G$-space; we consider $X$ to be a contravariant
$\Or(G)$-space by
Proposition~\ref{proposition:recognize-spaces-with-actions}.

\begin{definition}
$B \Or(G)$ is the covariant $\Or(G)$-space given by
$$
G/H \mapsto BH = K(H,1),
$$
and sending the map $G/H \to G/H'$ to the map $BH \to BH'$ induced
from $H \subset H'$.
\end{definition}

\begin{proposition}
\label{prop:balanced-products}
For a $G$-space $X$,
$$
X \times_{\Or(G)} B\Or(G)
$$
is associated to the $G$-space $X \times_G EG = (X \times EG)/G$.
\end{proposition}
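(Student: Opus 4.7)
The plan is to factor the covariant $\Or(G)$-space $B\Or(G)$ as a Borel-type construction and then reduce to the identity $X \times_{\Or(G)} \nabla \cong X$ from Proposition~\ref{proposition:recognize-spaces-with-actions}. For any model of $EG$, the canonical homeomorphism
\[
B\Or(G)(G/H) \;=\; BH \;=\; EG/H \;\cong\; (G/H \times EG)/G
\]
is functorial in $G/H$, so that $B\Or(G) \cong (\nabla \times EG)/G$ as covariant $\Or(G)$-spaces, where $G$ acts diagonally and $\nabla$ is the covariant $\Or(G)$-space sending $G/H$ to itself.

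Next I would establish a Fubini-type identity: for any $G$-space $Y$,
\[
X \times_{\Or(G)} \bigl[(\nabla \times Y)/G\bigr] \;\cong\; \bigl[(X \times_{\Or(G)} \nabla) \times Y\bigr] \bigm/ G .
\]
This is formal nonsense, being the statement that the coend defining the balanced product commutes with the $G$-quotient; both are colimits, and colimits commute with colimits. Applying this identity with $Y = EG$ and using Proposition~\ref{proposition:recognize-spaces-with-actions} to recognize $X \times_{\Or(G)} \nabla$ as $X$ (with its original $G$-action) immediately yields
\[
X \times_{\Or(G)} B\Or(G) \;\cong\; (X \times EG)/G ,
\]
which is the desired conclusion.

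If one prefers an explicit argument, the map $f : \bigsqcup_{H \le G} X^H \times BH \to (X \times EG)/G$ sending $(x, [e]_H) \mapsto [(x,e)]_G$ is well-defined and descends to the balanced product: for a morphism $\phi_\gamma : G/H \to G/K$ in $\Or(G)$ with $\gamma^{-1} H \gamma \subseteq K$, the imposed relation $(\gamma x, [e]_H) \sim (x, [\gamma^{-1} e]_K)$ is sent to the single class $[(\gamma x, e)]_G = [\gamma \cdot (x, \gamma^{-1} e)]_G$. An inverse is defined by choosing for each orbit a representative $(x,e)$ and sending $[(x,e)]_G$ to $(x, [e]_{G_x}) \in X^{G_x} \times B(G_x)$. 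The main obstacle is the bookkeeping in $\Or(G)$: correctly identifying how a morphism $\phi_\gamma$ acts contravariantly on $X^{(-)}$ and covariantly on $B(-)$ under a fixed model of $EG$, so that the equivalence relation in the balanced product lines up with the $G$-orbit relation. Once this is set up properly, the identification is immediate.
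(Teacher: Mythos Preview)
The paper states this proposition without proof, so there is nothing to compare against directly. Your argument is correct: the identification $B\Or(G)\cong(\nabla\times EG)/G$ holds functorially in $G/H$, and the Fubini step is justified because $X\times_{\Or(G)}(-)$ is a coend (hence cocontinuous in the covariant variable) while $(-)\times EG$ preserves colimits in compactly generated spaces, so the balanced product commutes both with the $G$-quotient and with the product by the constant $\Or(G)$-space $EG$. Invoking Proposition~\ref{proposition:recognize-spaces-with-actions} to identify $X\times_{\Or(G)}\nabla$ with the original $G$-space $X$ then finishes the argument; your explicit map at the end is the concrete realisation of this abstract identification. The only mild caveat is that $X\times_{\Or(G)}\nabla\cong X$ is a homeomorphism for $G$-CW complexes rather than for arbitrary $G$-spaces, but in the context of the paper (finite complexes with a cellular $G$-action) this is no restriction.
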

In light of Proposition~\ref{prop:balanced-products}, it may appear
that the machinery of balanced products did little but complicate the
usual Borel construction; as we will see in a moment, the machinery of
balanced products will facilitate the calculation of an instant
finiteness obstruction.

\begin{maintheorem}
\label{proposition:the-finiteness-calculation}
  The finiteness obstruction $\Wall(X \times_{\Or(G)} B\Or(G))$
  vanishes provided
$$
\chi(\mbox{connected component of $X^H$}) = 0
$$
for all nontrivial subgroups $H \subset G$.
\end{maintheorem}
\begin{proof}
  The covariant $\Or(G)$-space which $\Q$-finitely dominates $B\Or(G)$
  is a point over each orbit, and the contravariant $\Or(G)$-space
  which $\Q$-finitely dominates $X$ is simply $X^H$ over each orbit.
  Thus the balanced product is finitely dominated, and it remains to
  calculate the finiteness obstruction
  $$
  \Wall(X \times_{\Or(G)} B\Or(G)) =
  \Wall(X) \otimes_{\Or(G)} \Wall(B\Or(G)).
  $$
  Since each $H$ is finite, the rational chain complex of $BH$ can be
  taken to be the single module $[\Q]$ in degree 0, since $[\Q]$ is
  projective as a $\Q H$ module.  In short, $\Wall(B\Or(G))(G/H)$ is
  $[\Q]$.

  Since $X$ is already finite, $\Wall(X)$ is the equivariant Euler
  characteristic; the equivariant Euler characteristic as defined on
  page 99 of~\cite{MR1027600} is a chain complex over the component
  category $\Pi_0(G,X)$, which involves a contribution from each
  connected component of the fixed sets $X^H$.  Provided that the
  Euler characteristics of the components of fixed sets of $X$ vanish,
  then the balanced tensor product also vanishes, and so too the
  finiteness obstruction.
\end{proof}

\section{Examples}
\label{section:examples}

There are examples where the finiteness obstruction
vanishes; we give two sources of such examples: the reflection
group trick, and lattices with torsion.

\subsection{Reflection group trick}

We have already seen the reflection group trick (\cite{MR690848,
  MR2360474}) in section~\ref{section:reflection-group-trick}. To
further illustrate the trick, we construct a group $\Gamma$, with
$n$-torsion, which is the fundamental group of a rational homology
manifold (in fact, a manifold) with $\Q$-acyclic universal cover.
Since the fundamental group $\Gamma$ has $n$-torsion, there is no
closed, aspherical manifold with fundamental group $\Gamma$.
\begin{proposition}
  Let $G = \Z \times \Zmod{n}$.  Then there exists a closed manifold
  $M$ so that
\begin{itemize}
\item $\pi_1 M$ retracts onto $G$,
\item $\tilde{M}$ is $\Q$-acyclic.
\end{itemize}
\end{proposition}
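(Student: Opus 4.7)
The plan is to reduce to an application of the acyclic reflection group trick of subsection~\ref{subsubsection:acyclic-reflection-group-trick}, with input the finite CW complex supplied by the Main Theorem.

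First, I would verify that $G = \Z \times \Zmod{n}$ lies in $\FH(\Q)$. Take $\pi = \Z$, so that $B\pi = S^1$, and let the finite group $\Zmod{n}$ act trivially on $S^1$. The Borel construction then satisfies $(E\Zmod{n} \times S^1)/\Zmod{n} \simeq B\Zmod{n} \times S^1$, whose fundamental group is $\Zmod{n} \times \Z = G$. For every nontrivial subgroup $H \subset \Zmod{n}$, the fixed set $(S^1)^H$ is all of $S^1$, and each connected component has $\chi(S^1) = 0$. Hence the hypothesis of the Main Theorem is satisfied, and there exists a finite CW complex $K$ with $\pi_1 K = G$ and $\tilde K$ rationally acyclic.

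Second, I would feed $K$ into the acyclic reflection group trick. Embed $K \hookrightarrow \R^N$ and take a regular neighborhood $N_K$, which is a compact manifold with boundary that deformation retracts onto $K$. Choose a flag triangulation $L$ of $\partial N_K$ and let $\mathbf{C}$ be the associated right-angled Coxeter group; form
$$
\tilde M = (N_K \times \mathbf{C})/\sim
$$
by gluing copies of $N_K$ along the mirrors $D_v$ exactly as in subsection~\ref{subsubsection:acyclic-reflection-group-trick}. Select a torsion-free finite index subgroup $\mathbf{C}' \leq \mathbf{C}$ (which exists because $\mathbf{C}$ is virtually torsion-free, e.g.\ via the standard linear representation and Selberg's lemma) and set $M = \tilde M / \mathbf{C}'$. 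By the construction, $M$ is a closed manifold, the Mayer--Vietoris argument already recorded for $W$ in Section~\ref{section:reflection-group-trick} gives that $\tilde M$ is $\Q$-acyclic, and $\pi_1 M = \mathbf{C}' \rtimes G$ admits a retraction onto $G$ by projection onto the second factor.

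The only nontrivial step is the first one, and there is essentially no obstacle: because $\Zmod{n}$ is arranged to act trivially on $S^1$, the Euler characteristic hypothesis of the Main Theorem is automatic. Once $K$ is in hand, the reflection group trick does all of the geometric work, and the retraction onto $G$ is visible from the semidirect product structure of $\pi_1 M$. In particular, $\pi_1 M$ has $n$-torsion (inherited from the retract $G$), so $M$ cannot be homotopy equivalent to an aspherical closed manifold.
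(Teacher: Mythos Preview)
Your proposal is correct and follows essentially the same approach as the paper: apply the Main Theorem with the trivial $\Zmod{n}$-action on $S^1$ to obtain $G \in \FH(\Q)$, then feed the resulting finite complex into the acyclic reflection group trick. The paper additionally spells out an explicit free $\Q[G]$-resolution of $\Q$ (passing through $\FP(\Q)$ and $\FL(\Q)$ and invoking Leary's theorem) as an alternative route to $\FH(\Q)$ that sidesteps the Main Theorem, but your use of the Main Theorem is exactly the paper's primary argument.
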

\begin{proof}
  Let $\pi = \Z$, so that $B\Z = S^1$.  Consider $B\Z$ with the
  trivial $\Zmod{n}$ action, so that the fixed set $(B\Z)^{\Zmod{n}} =
  B\Z = S^1$ has vanishing Euler characteristic.  By the Main Theorem,
  there is a finite complex $Y$ having $\pi_1 Y = \Z \times \Zmod{n}$
  and having universal cover $\tilde{Y}$ rationally acyclic.  In other
  words, $G = \Z \times \Zmod{n} \in \FH(\Q)$.  For convenience, name
  the generators of $G$ by setting $G = \langle t, s \rangle s^n = 1,
  st = ts \rangle$.

  For this example we do not have to apply the Main Theorem to show $G
  \in \FH(\Q)$.  A construction of $Y$, side-stepping the Main
  Theorem, begins by showing that $G \in \FP(\Q)$, then that $G \in
  \FL(\Q)$, and finally that $G \in \FH(\Q)$, which yields the desired
  space.

  First, note that $\Q[\Z]$ is a projective $\Q[G]$-module, so 
  \begin{equation}
    \label{eqn:complex}
  \Q \longleftarrow \Q[\Z] \longleftarrow \Q[\Z] \longleftarrow 0
  \end{equation}
  is a finite length projective resolution of $\Q$ as a trivial
  $\Q[G]$-module; in other words, $G \in \FP(\Q)$.  To see that
  $\Q[\Z]$ is a projective $\Q[G]$-module, use the projection
  $$
  \proj_1(z) = \frac{1}{n} \left( 1 + s + \cdots + s^{n-1} \right) z
  $$
  which has image isomorphic to $\Q[\Z]$ on which $s$ acts trivially. Let
  $\Q[\Z]'$ be the image of the complementary projection,
  $$
  \proj_0(z) = \frac{1}{n} \left( (n-1) - s - + \cdots - s^{n-1} \right) z,
  $$
  so that $\Q[G] = \Q[\Z] \oplus \Q[\Z]'$.

  This projective resolution can be improved to a free resolution of
  $\Q$ as a trivial $\Q[G]$-module.  Tensor the
  resolution~(\ref{eqn:complex}) with the resolution
  $$
  \Q \longleftarrow \Q[\Zmod{n}] \longleftarrow \Q' \longleftarrow 0,
  $$
  where $\Q'$ is the $\Q[\Zmod{n}]$-module so that $\Q \oplus \Q'
  \cong \Q[\Zmod{n}]$.  This yields a complex
  $$
  \Q \longleftarrow \Q[G] \longleftarrow \Q[G] \oplus \Q[\Z] \longleftarrow
  \Q[\Z] \longleftarrow 0,
  $$
  By including a canceling pair of the complements $\Q[\Z]'$, we
  arrive at the desired free resolution
  $$
  \Q \stackrel{\epsilon}{\longleftarrow} \Q[G]
  \stackrel{d_1}{\longleftarrow}  \Q[G]^2
  \stackrel{d_2}{\longleftarrow}  \Q[G] 
  \longleftarrow 0.
  $$
  Here $\epsilon$ is the augmentation,
  \begin{align*}
    d_1(x,y) &= (1-t) x + \proj_1 y, \mbox{ and }\\
    d_2(z) &= (\proj_0 z, -(1-t) \proj_1 z + \proj_0 z).
  \end{align*}
  Thus, $G \in \FL(\Q)$.

  Theorem~9.4 of \cite{MR1912650} states that a group of
  finite type is $\FL(\Q)$ iff it is $\FH(\Q)$, so the free resolution
  in fact yields a space $Y$ having $\pi_1 Y = \Z \times \Zmod{n}$ and
  rationally acyclic universal cover $\tilde{Y}$.

  Since $Y$ is finite, it embeds in $\R^N$ for some $N$, and we can
  apply the reflection group trick to a regular neighborhood of $Y
  \subset \R^N$, producing a manifold (not merely a rational homology
  manifold) $M$ with universal cover $\tilde{M}$ rationally acyclic,
  and $\pi_1 M$ retracting onto $G$.
\end{proof}

\subsection{Preliminaries on lattices}

Historically, the first source of Poincar\'e duality groups were
fundamental groups of aspherical manifolds, and a basic source of
aspherical manifolds are lattices.
\begin{proposition}[\cite{MR145455}]
Let $G$ be a semisimple Lie group, $K$ a maximal compact, and $\Gamma$
a torsion-free uniform lattice (i.e., a discrete cocompact subgroup).
Then $\GmodKmodGamma$ is a compact, aspherical manifold with
fundamental group $\Gamma$.
\end{proposition}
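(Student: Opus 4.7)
The plan is to realize $\Gamma\backslash G/K$ as an aspherical manifold by showing that $G/K$ is a contractible manifold on which $\Gamma$ acts freely and properly discontinuously. The fundamental group and asphericity will then follow from covering space theory, and compactness from the uniform lattice hypothesis.

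First I would recall the Cartan decomposition $G = K \exp(\mathfrak{p})$, where $\mathfrak{g} = \mathfrak{k} \oplus \mathfrak{p}$ is the decomposition into $\pm 1$ eigenspaces of the Cartan involution. This exhibits $G/K$ as a symmetric space of non-compact type; the exponential map at the identity coset gives a diffeomorphism $\mathfrak{p} \to G/K$, so $G/K$ is diffeomorphic to $\R^{\dim \mathfrak{p}}$ and in particular is contractible. The group $G$ acts smoothly on $G/K$ by left translation, and since $K$ is compact, this action is proper.

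Second I would verify that $\Gamma$ acts freely. The stabilizer in $G$ of a coset $gK$ is $gKg^{-1}$, a conjugate of the maximal compact. So the stabilizer of $gK$ in $\Gamma$ is $\Gamma \cap gKg^{-1}$, which is both discrete (as a subgroup of $\Gamma$) and compact (as a closed subgroup of $gKg^{-1}$), hence finite. Because $\Gamma$ is torsion-free, this finite subgroup must be trivial, so the action is free. Properness of $G$'s action on $G/K$, together with discreteness of $\Gamma$ in $G$, yields proper discontinuity of the $\Gamma$-action. Consequently $G/K \to \Gamma \backslash G/K$ is a covering projection onto a smooth manifold.

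Third, since the base is covered by a contractible space via a free action of $\Gamma$, the quotient $\Gamma \backslash G/K$ is a $K(\Gamma,1)$, i.e.\ an aspherical manifold with $\pi_1 = \Gamma$. Finally, the hypothesis that $\Gamma$ is a uniform (cocompact) lattice means $\Gamma \backslash G$ is compact; quotienting further by the compact group $K$ on the right gives the compact space $\Gamma \backslash G/K$.

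There is no real obstacle here, as this is a classical result; the only points that require genuine argument are the contractibility of $G/K$ (Cartan's theorem on symmetric spaces of non-compact type) and the implication torsion-free $+$ discrete $\Rightarrow$ free action, and both are standard. One mild subtlety worth flagging is that $G$ is allowed to have a compact factor; one either assumes $G$ has no compact factors or observes that a compact factor contributes nothing to $G/K$, so the argument is unaffected.
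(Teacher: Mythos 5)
Your proof is correct and is the standard classical argument; the paper itself gives no proof, merely citing the result (the reference is to a paper of Borel on compact Clifford--Klein forms). All the key steps are in order: the Cartan decomposition exhibiting $G/K$ as diffeomorphic to Euclidean space, the observation that a discrete-and-compact stabilizer $\Gamma \cap gKg^{-1}$ is finite and hence trivial by torsion-freeness, properness of the action, and the descent of compactness from $\Gamma\backslash G$ to $\Gamma\backslash G/K$. Your remark about compact factors is a reasonable caveat, though one can also simply note that the Cartan decomposition argument applies verbatim and $G/K$ remains contractible regardless.
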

\noindent
Consequently, uniform torsion-free lattices $\Gamma$ satisfy
$\PD(\Z)$.  Next, we examine what happens when $\Gamma$ is only virtually $\PD(\Z)$.
\begin{proposition}
\label{proposition:identify-pd}
Let $G$ be a finite group, $\pi$ a group satisfying $\PD(\Z)$, and $\Gamma$ an extension,
$$
1 \to \pi \to \Gamma \to G \to 1.
$$
Then $\Gamma$ satisfies $\PD(\Q)$.
\end{proposition}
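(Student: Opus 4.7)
The plan is to verify the three defining conditions for $\Gamma \in \PD^n(\Q)$, where $n$ is the formal dimension of $\pi$: (i) $\operatorname{cd}_\Q \Gamma = n < \infty$; (ii) $H^i(\Gamma;\Q\Gamma) = 0$ for $i \neq n$; and (iii) $H^n(\Gamma;\Q\Gamma) \cong \Q$. The two essential inputs are that $|G|$ is a unit in $\Q$ and that $\pi$ is itself $\PD^n(\Q)$. The latter follows by flat base change: since $\pi$ satisfies $\PD^n(\Z)$ it is of type $FP$ over $\Z$, so tensoring a finite projective $\Z\pi$-resolution of $\Z$ with $\Q$ produces a finite projective $\Q\pi$-resolution, giving $H^i(\pi;\Q\pi) = H^i(\pi;\Z\pi) \otimes_\Z \Q$, which vanishes outside degree $n$ and equals the orientation character $\omega \cong \Q$ in degree $n$. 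Because $[\Gamma:\pi] = |G|$ is invertible in $\Q$, the averaging operator $\tfrac{1}{|G|}\sum_{g\in G} g$ splits restriction and forces $\operatorname{cd}_\Q \Gamma = \operatorname{cd}_\Q \pi = n$, establishing (i).

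For (ii) and the reduction of (iii), I would apply the Lyndon--Hochschild--Serre spectral sequence
\[
E_2^{p,q} = H^p\bigl(G;\; H^q(\pi;\Q\Gamma)\bigr) \Longrightarrow H^{p+q}(\Gamma;\Q\Gamma).
\]
A choice of coset representatives decomposes $\Q\Gamma$ as a free left $\Q\pi$-module of rank $|G|$, so $H^q(\pi;\Q\Gamma) \cong H^q(\pi;\Q\pi)^{\oplus |G|}$ is zero for $q \neq n$ and has $\Q$-dimension $|G|$ for $q = n$. Since $|G|$ is a unit in $\Q$, $H^p(G;-) = 0$ for all $p > 0$ on any $\Q G$-module, so the spectral sequence collapses at $E_2$. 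This immediately gives $H^i(\Gamma;\Q\Gamma) = 0$ for $i \neq n$ and identifies $H^n(\Gamma;\Q\Gamma)$ with the $G$-invariant subspace of $H^n(\pi;\Q\Gamma)$.

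The main obstacle is showing $H^n(\pi;\Q\Gamma)^G \cong \Q$. The LHS $G$-action on $H^n(\pi;\Q\Gamma) \cong \omega^{\oplus |G|}$ is computed by picking a lift $\tilde g \in \Gamma$ of each $g \in G$ and combining conjugation on $\pi$ with left multiplication by $\tilde g$ on $\Q\Gamma$. In the basis indexed by $\Gamma/\pi \cong G$, this realizes a signed permutation representation: $G$ permutes the $|G|$ summands via its own left-regular action on itself, decorated with $\pm 1$ signs built from $\omega$ and the conjugation action of $\Gamma$ on $\pi$. Because the underlying permutation action is free and transitive, the cocycle of signs is automatically a coboundary --- explicitly, rescaling each basis vector by the sign with which its index acts on the identity basis vector trivializes the signs via the cocycle identity --- so the $G$-representation is isomorphic to the regular representation $\Q[G]$, whose space of $G$-invariants is the one-dimensional line spanned by $\sum_{g \in G} g$. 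This completes the verification that $\Gamma \in \PD^n(\Q)$.
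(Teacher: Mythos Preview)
Your argument is correct, but it takes a genuinely different route from the paper. The paper's proof is a two-line citation: it invokes Bieri's theorem that an extension of an $R$-Poincar\'e duality group by an $R$-Poincar\'e duality group is again one, together with the observation that a finite group $G$ is a $0$-dimensional $\Q$-Poincar\'e duality group (since $\Q$ is a summand of $\Q G$ via averaging). You instead unpack the special case of that extension theorem by hand, running the Lyndon--Hochschild--Serre spectral sequence and explicitly identifying the $G$-module $H^n(\pi;\Q\Gamma)$ with the regular representation via the nice observation that any twisting cocycle over a free transitive $G$-set is automatically a coboundary. This is more work but also more transparent: it shows concretely how the dualizing module of $\Gamma$ is assembled from that of $\pi$, rather than hiding the mechanism inside a reference. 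One small omission worth patching: the standard definition of $\PD^n(\Q)$ requires type $\FP$ over $\Q$, not merely $\operatorname{cd}_\Q\Gamma<\infty$; you should remark that $\Gamma$ inherits type $\FP$ from its finite-index subgroup $\pi$. As an aside, a still shorter direct route to your (ii) and (iii) is Shapiro's lemma, which for any finite-index $\pi\le\Gamma$ gives $H^\ast(\Gamma;\Q\Gamma)\cong H^\ast(\pi;\Q\pi)$ without ever invoking the spectral sequence.
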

\noindent
One can do better than $\Q$: if $R = \Z[1/G]$, meaning $\Z$
with divisors of $|G|$ inverted, then $\Gamma$ is $\PD(R)$.
\begin{proof}
  Extensions of Poincar\'e duality groups by Poincar\'e duality groups
  satisfy Poincar\'e duality \cite{MR311796}, and finite groups are
  $0$-dimensional $\Q$-Poincar\'e duality groups.
\end{proof}

Understanding groups satisfying which are virtually $\PD(\Z)$ permits
us to examine linear groups with torsion.
\begin{selberglemma}[\cite{MR130324}]
  Every finitely generated linear group contains a finite index normal
  torsion-free subgroup (in other words, every such group is virtually
  torsion-free).
\end{selberglemma}
\begin{example}
  Uniform lattices, even when they contain torsion, satisfy $\PD(\Q)$
  because, by Selberg's lemma, a uniform lattice is virtually
  torsion-free, and therefore, satisfies $\V\PD(\Z)$.
\end{example}

Whether $\chi(\GmodKmodGamma)$ vanishes is indepedent of $\Gamma$; it
depends only on the Lie group $G$.  This is true even if $\Gamma$ is
non-uniform (via measure equivalence \cite{MR1919400} and the equality
of the $L^2$ and usual Euler characteristic
\cite{MR420729,MR1926649}).  The fixed sets are themselves lattices in
smaller Lie groups, so it is easy to check that the Euler
characteristic vanishes on fixed sets.  As a result, lattices form a
particularly nice class with respect to the finiteness obstructions in
the Main Theorem.  In the next section, we produce some examples.

\subsection{Vanishing Euler characteristics of fixed sets}

\begin{proposition}
  \label{proposition:lattice-example}
  For odd $n$, there is a uniform torsion-free arithmetic lattice
  $\pi$ in $\SO(n,1)$ and a $\Zmod{n}$ action on the locally symmetric
  space
$$
X = \pi \backslash \SO(n,1) / \SO(n)
$$
with fixed set $X^{\Zmod{n}} = S^1$.
\end{proposition}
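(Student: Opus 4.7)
I would realize the action as an order-$n$ rotation about a geodesic in $\mathbb{H}^n$, hosted inside a torsion-free finite-index subgroup of a uniform arithmetic lattice that contains the rotation. Decompose $\mathbb{R}^{n,1} = \mathbb{R}^{1,1}\oplus\mathbb{R}^{n-1}$ orthogonally, so that $\SO(1,1)\times\SO(n-1)\subset\SO(n,1)$; the hyperbolic geodesic $\gamma_0\subset\mathbb{H}^n$ associated with the first summand is fixed pointwise by $\SO(n-1)$. Because $n$ is odd, $n-1$ is even, so $\SO(n-1)$ contains an element $\rho$ of order $n$ assembled from $(n-1)/2$ orthogonal blocks of rotation by $2\pi/n$; this $\rho$ acts on $\mathbb{R}^{n-1}$ with only the origin fixed, and hence on $\mathbb{H}^n$ as an elliptic isometry of order $n$ whose fixed set is exactly $\gamma_0$. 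The parity hypothesis is used only here: for $n$ even every element of $\SO(n-1)$ has a nonzero fixed vector, so no such rotation exists.

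Next, I would house $\rho$ in a uniform arithmetic lattice $\Lambda\subset\SO(n,1)$. Let $F$ be a totally real number field containing $\cos(2\pi/n)$ with $F\neq\mathbb{Q}$, and choose a quadratic form $q$ on $F^{n+1}$ of signature $(n,1)$ at one real place and positive-definite at every other real place; then by Borel--Harish-Chandra $\Lambda:=\SO(q,\mathcal{O}_F)$ is a uniform arithmetic lattice in $\SO(n,1)$, cocompactness being automatic from the definite places. The matrix $\rho$ has entries polynomial in $\cos(2\pi/n)\in\mathcal{O}_F$, and by choosing $q$ compatibly with the block decomposition above one arranges $\rho\in\Lambda$. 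Since $Z_{\SO(n,1)}(\rho)\supset\SO(1,1)$ translates along $\gamma_0$, and $Z_\Lambda(\rho)$ is an arithmetic lattice in this centralizer, it contains a hyperbolic translation $\tau$ along $\gamma_0$.

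Selberg's Lemma then supplies a torsion-free normal subgroup $\pi\trianglelefteq\Lambda$ of finite index. Since $\pi$ is torsion-free, $\rho$ projects to an element of order exactly $n$ in $\Lambda/\pi$, giving a $\Zmod{n}$-action on the closed hyperbolic manifold $X = \pi\backslash\mathbb{H}^n$. A direct computation shows that $[x]\in X^{\Zmod{n}}$ iff $\rho x = \gamma x$ for some $\gamma\in\pi$, iff $x$ lies in some $\pi$-translate of $\gamma_0$; in particular the component of $X^{\Zmod{n}}$ containing the image of $\gamma_0$ equals $\gamma_0/(\pi\cap\mathrm{Stab}_\Lambda(\gamma_0))$, which is a circle because $\pi$ contains a power of $\tau$.

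The main obstacle is ruling out other components. Extra components would correspond to $\pi$-conjugacy classes of elliptic elements in the coset $\pi\rho$ whose axes are not $\pi$-translates of $\gamma_0$. Since an arithmetic lattice has only finitely many $\Lambda$-conjugacy classes of finite-order elements, I would deal with this by replacing $\pi$ with its intersection with a principal congruence subgroup of sufficiently high level---still torsion-free, normal, and of finite index in $\Lambda$---and invoking the residual finiteness (indeed conjugacy separability) of arithmetic groups to isolate the $\Lambda$-class of $\rho$ from every other finite-order class within the coset $\pi\rho$. This leaves the single component $\gamma_0/(\pi\cap\mathrm{Stab}_\Lambda(\gamma_0))\cong S^1$ and yields $X^{\Zmod{n}}=S^1$ as required.
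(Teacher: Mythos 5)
Your proposal takes a genuinely different route for both the choice of order-$n$ isometry and the arithmetic construction, and attempts a strengthening the paper does not pursue.

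The paper realizes $\Zmod{n}$ by the cyclic coordinate permutation on the last $n$ coordinates of $\R^{n+1}$: this is a $\{0,1\}$-matrix (hence integral), it automatically preserves the quadratic form $\sum x_i^2 - \sqrt{2}\,x_0^2$, and its fixed subspace is the signature-$(1,1)$ diagonal. Consequently the entire construction lives over $\Z[\sqrt{2}]$ with no dependence on $n$; the $\Zmod{n}$ normalizes $G_{\mathcal{O}}$ for free, and one merely intersects translates of a torsion-free finite-index subgroup. Your block-rotation $\rho$, by contrast, has entries involving both $\cos(2\pi/n)$ \emph{and} $\sin(2\pi/n)$, and $\sin(2\pi/n)$ generally does not lie in $\Q(\cos(2\pi/n))$ (it lies in a quadratic extension, since only $\sin^2(2\pi/n)=1-\cos^2(2\pi/n)$ is rational over that field). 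You would need $F\supset\Q(\cos(2\pi/n),\sin(2\pi/n))$, and you would still need to confirm $F$ is totally real, clear denominators so that $\rho\in\SO(q,\mathcal{O}_F)$ (note $\cos(2\pi/3)=-1/2$ is not an algebraic integer), and verify that your chosen form becomes definite at all the embeddings other than the distinguished one. None of this is impossible, but it is a substantially heavier arithmetic setup than the paper's permutation trick, which sidesteps every one of these issues.

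The bigger discrepancy is in the final step. The paper's proof does \emph{not} show that $X^{\Zmod{n}}$ is a single circle; it argues only that the fixed set is a nonempty $1$-manifold (a disjoint union of circles), using the link-of-a-fixed-point argument to rule out isolated fixed points when $n$ is odd. That conclusion has vanishing Euler characteristic, which is all that the Main Theorem requires. Your proposal tries to prove the literal statement $X^{\Zmod{n}}=S^1$, but the argument for uniqueness of the component is not sound as written: (i) you implicitly assume every elliptic element of the coset $\pi\rho$ has a $1$-dimensional fixed set (an ``axis''), but for general $\gamma\in\pi$ the element $\gamma\rho$ could a priori fix a higher-dimensional totally geodesic subspace, or a single point---the latter possibility is exactly what the paper's link argument excludes, and you never address it; and (ii) passing to a congruence subgroup of high level plus conjugacy separability does not by itself show that the coset $\pi'\rho$ contains only one $\pi'$-conjugacy class of finite-order elements---conjugacy separability separates a given pair of classes in some finite quotient, but it does not produce a single finite-index subgroup whose relevant coset is ``pure''. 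This step would need a real argument, and it is not one the application needs.
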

\begin{proof}
  We first recall the usual construction of arithmetic lattices; we
  follow Chapter~15C of \cite{arXiv:math.dg:0106063} and describe how
  to produce an arithmetic lattice in $\SO(n,1)$.  Begin by defining a
  bilinear form
$$
B(x,y) = \sum_{i=1}^n x_i\,y_i - \sqrt{2}\,x_0\,y_0
$$
so that $G = \SO(B) = \SO(n,1)$.  Note that $\Zmod{n}$ acts on
$\R^{n+1}$ preserving this form, and that the action is by integer
matrices.  Let $\mathcal{O} = \Z[\sqrt{2}]$, and let $G_{\mathcal{O}}$
denote the $\mathcal{O}$-points of $G$.

The diagonal map $\Delta : G \to G \times G^{\sigma}$, for $\sigma$
the Galois automorphism of $\Q(\sqrt{2})$ over $\Q$, sends
$G_{\mathcal{O}}$ to $\Delta(G_{\mathcal{O}})$, a lattice in $G \times
G^{\sigma}$.  But $G^{\sigma} = \SO(n+1)$ is compact, so after
quotienting by the Galois automorphism, $G_{\mathcal{O}}$ remains a
lattice in $G$.  And $G_{\mathcal{O}}$ is cocompact, by the Godement
Compactness Criterion (that arithmetic lattices are cocompact
precisely when they have no nontrivial unipotents \cite{MR141672}).
By Selberg's lemma, we may choose a finite-index subgroup of
$G_{\mathcal{O}}$; let $\pi$ denote the intersection of translates of
this finite-index subgroup under the $\Zmod{n}$ action.

The action of $\Zmod{n}$ on $\SO(n,1)$ descends to the quotient
$$X = \pi \backslash \SO(n,1) / \SO(n),$$
since it preserves the lattice $\pi$.  In the universal cover
$\SO(n,1) / \SO(n)$, the set fixed by $\Zmod{n}$ is a line; in the
quotient manifold $X$, the set fixed by $\Zmod{n}$ is no more than $1$
dimensional.  It is possible that the action might also have some
isolated fixed points---but there are no isolated fixed points,
because $\Zmod{n}$ cannot fix isolated points on an odd-dimensional
manifold (lest it freely act on the link, an even-dimensional sphere).
So the fixed set $X^{\Zmod{n}}$ is a $1$-manifold, i.e., a disjoint
union of circles.
\end{proof}
By Proposition~\ref{proposition:lattice-example}, there is an extension
$$
1 \to \pi \to \Gamma \to \Zmod{n} \to 1
$$
and since $\chi(B\pi^{\Zmod{n}}) = \chi(S^1) = 0$, the equivariant
finiteness theory implies that there exists a space $Y$ with $\pi_1 Y
= \Gamma$ and whose the universal cover $\tilde{Y}$ is a rationally
acyclic space.  In short, $\Gamma \in \FH(\Q)$.
\begin{question}
  \label{question:which-manifolds-have-group-actions}
  For which $n$ does $\Z/p\Z$ act with nontrivial fixed set on an
  hyperbolic $n$-manifold?
\end{question}
\noindent
This is possible in dimensions $2$ and $3$ by taking branched covers
(as in \cite{MR892185}).  Asking for a nontrivial fixed set is
important: Belolipetsky and Lubotzky \cite{MR2198218} have shown that
for $n \geq 2$, every finite group acts \textit{freely} on a compact
hyperbolic $n$-manifold.

In contrast, the construction in
Proposition~\ref{proposition:lattice-example} required dimension at
least $n$ to get $\Zmod{n}$ to act with nontrivial fixed set.  The
fact that there are only finitely many arithmetic triangle groups
\cite{MR429744} is perhaps relevant to answering Question
\ref{question:which-manifolds-have-group-actions}.

If we relax Question~\ref{question:which-manifolds-have-group-actions}
to a combinatorial curvature condition (i.e., locally $\CAT(-1)$; see
\cite{MR1744486}), we can easily prove the following.
\begin{proposition}
  \label{proposition:combinatorial-odd-dim-examples}
  For every $p$ and odd $n \geq 3$, there is a locally $\CAT(-1)$
  manifold $M$ admitting a $\Zmod{p}$ action having fixed set
  $M^{\Zmod{p}}$ a disjoint union of circles.
\end{proposition}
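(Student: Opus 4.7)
\textbf{The plan} is to construct $M$ as an equivariant strict hyperbolization of a simple smooth model that already carries the desired $\Zmod{p}$-action.

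Because $n$ is odd, $n-1$ is even, so I would identify $\R^{n-1}$ with $\C^{(n-1)/2}$ and let $\Zmod{p}$ act by simultaneous multiplication by $e^{2\pi i/p}$ on every $\C$-factor. This action is free off the origin, and hence descends to a smooth $\Zmod{p}$-action on the sphere $S^{n-1}\subset\R\oplus\R^{n-1}$ (with trivial action on the $\R$-factor) whose fixed set is exactly the two points $\{N,S\}$ on the $\R$-axis. Set $N_0:=S^1\times S^{n-1}$ and let $\Zmod{p}$ act trivially on the $S^1$-factor and as above on $S^{n-1}$. Then $N_0$ is a closed smooth $n$-manifold and $N_0^{\Zmod{p}} = S^1\sqcup S^1$, a disjoint union of two circles.

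Next I would equip $N_0$ with a $\Zmod{p}$-equivariant smooth triangulation (refining by barycentric subdivision if necessary so that any simplex stabilised setwise by $\Zmod{p}$ is actually fixed pointwise) in which $N_0^{\Zmod{p}}$ appears as a subcomplex; Illman-type equivariant-triangulation results provide this. Feeding the triangulation into the Charney--Davis strict hyperbolization (building on Davis--Januszkiewicz), I obtain a closed topological $n$-manifold $M := h(N_0)$ whose Charney--Davis metric is piecewise hyperbolic and locally $\CAT(-1)$, and by naturality the simplicial $\Zmod{p}$-action on $N_0$ lifts to an isometric $\Zmod{p}$-action on $M$.

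Finally, I would identify the fixed set. Since the triangulation has been refined so that every simplex of $N_0$ is either pointwise fixed or moved off itself, the induced action on each hyperbolized cell either fixes it pointwise or moves it entirely, giving $M^{\Zmod{p}} = h(N_0^{\Zmod{p}})$. As $N_0^{\Zmod{p}}$ is a $1$-dimensional subcomplex, its hyperbolization merely replaces each edge by a hyperbolic interval and preserves the underlying space, so $h(N_0^{\Zmod{p}}) \cong S^1\sqcup S^1$.

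\textbf{The main obstacle} is the equivariance and fixed-set bookkeeping: one must ensure the chosen equivariant triangulation is admissible and that the cell-by-cell Charney--Davis construction introduces no extra fixed points on the interiors of hyperbolized cells over simplices whose vertex sets $\Zmod{p}$ permutes nontrivially. Once this combinatorial setup is arranged and the three input properties of hyperbolization (manifold preservation, local $\CAT(-1)$ metric, naturality of the group action) are cited cleanly, the rest of the argument is routine.
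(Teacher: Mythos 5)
Your proof takes essentially the same approach as the paper's: build a closed $n$-manifold with a $\Zmod{p}$-action whose fixed set is a disjoint union of circles, triangulate equivariantly (refining so that the action is admissible), apply Charney--Davis strict hyperbolization, and use the functoriality of hyperbolization with respect to injective simplicial maps to transport the action. The only substantive difference is the choice of model: you use $S^1 \times S^{n-1}$ with $\Zmod{p}$ rotating $S^{n-1}$ via complex multiplication on $\R^{n-1}\cong\C^{(n-1)/2}$ (fixed set two circles), whereas the paper uses the join $S^n = S^{n-2} \ast S^1$, with $\Zmod{p}$ free on the odd sphere $S^{n-2}$ and trivial on $S^1$ (fixed set one circle). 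Both are perfectly good inputs to the hyperbolization machine, so this is the same strategy with cosmetic variation.

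One small imprecision worth correcting: strict hyperbolization does \emph{not} simply ``preserve the underlying space'' of a $1$-dimensional subcomplex. As the paper notes, the $1$-skeleton of $h(X)$ consists of \emph{two} copies of $X^{(1)}$, so each fixed circle in the input contributes two circles to the hyperbolization; in your model the fixed set of $M$ would be four circles, not two. This does not affect the conclusion (the fixed set is still a disjoint union of circles), but the statement that $h$ restricted to a $1$-complex merely replaces edges by hyperbolic intervals should be dropped.
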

\begin{proof}
  In brief, first construct an action of $\Zmod{p}$ on a closed
  $n$-manifold $X^n$, having fixed set a disjoint union of circles, and
  finish by hyperbolizing.  Now we spell out a few details.

  Our construction of $X^n$ depends on our assumption that $n$ is odd;
  in this case, $\Zmod{p}$ acts freely on the odd-dimensional sphere
  $S^{n-2}$, and by taking the join with a circle on which $\Zmod{p}$
  acts trivially, we get an action of $\Zmod{p}$ on $S^n = S^{n-2}
  \ast S^1$ having fixed set $S^1$.

  Triangulate $X$ equivariantly; consequently, the fixed set $S^1$ is
  in the $1$-skeleton of $X$.

  Now apply strict hyperbolization \cite{MR1318879,MR1131435} to the
  triangulation of $X$.  The hyperbolized space inherits a $\Zmod{p}$
  action (since hyperbolization is functorial with respect to
  injective simplicial maps).  The $1$-skeleton of the hyperbolization
  of $X$ consists of two copies of $X^{(1)}$, so a fixed circle in $X$
  contributes two circles to the hyperbolization.

\end{proof}
We cannot do something similar for even dimensional manifolds (because
a $\Zmod{p}$ action with circle fixed set would give, by considering
the link of a fixed point, a $\Zmod{p}$ action on an odd dimensional
sphere with two fixed points, which is not possible).  But by
crossing the output of
Proposition~\ref{proposition:combinatorial-odd-dim-examples} with
$S^1$ on which $\Zmod{p}$ acts trivially, we produce an even
dimensional $\CAT(0)$ manifold with a $\Zmod{p}$ action fixing a
disjoint union of tori.  That is, we have shown
\begin{corollary}
  For every $p$ and every $n \geq 3$, there is a locally $\CAT(0)$
  manifold $M$ admitting a $\Zmod{p}$ action having non-empty fixed
  set with vanishing Euler characteristic.
\end{corollary}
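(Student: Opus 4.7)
The plan is to split on the parity of $n$ and reduce the even case to the odd case by taking a product. For $n$ odd with $n \geq 3$, the manifold $M$ furnished by Proposition~\ref{proposition:combinatorial-odd-dim-examples} already does the job: it is locally $\CAT(-1)$, hence \emph{a fortiori} locally $\CAT(0)$, and its $\Zmod{p}$-fixed set is a non-empty disjoint union of circles, which trivially has vanishing Euler characteristic. So the content of the corollary is really the case $n$ even.

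For even $n \geq 4$, set $n' = n-1$, which is odd and $\geq 3$, and let $M'$ be the locally $\CAT(-1)$ manifold provided by Proposition~\ref{proposition:combinatorial-odd-dim-examples} carrying a $\Zmod{p}$-action with fixed set a disjoint union of circles. I would then form
$$
M = M' \times S^1,
$$
equipped with the diagonal $\Zmod{p}$-action in which $\Zmod{p}$ acts on $M'$ as before and acts trivially on the $S^1$ factor. Since $M'$ is locally $\CAT(-1)$ and $S^1$ with its flat metric is locally $\CAT(0)$, and since products of locally $\CAT(0)$ spaces are locally $\CAT(0)$, the manifold $M$ is locally $\CAT(0)$. (This is precisely where we are forced to drop from $\CAT(-1)$ to $\CAT(0)$, since a product involving a flat factor cannot be negatively curved.)

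It remains to compute the fixed set. Because the action on the $S^1$ factor is trivial,
$$
M^{\Zmod{p}} = (M')^{\Zmod{p}} \times S^1,
$$
so $M^{\Zmod{p}}$ is a disjoint union of $2$-tori, which is non-empty and has vanishing Euler characteristic. This handles all even $n \geq 4$ and completes the argument.

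The only point requiring any care is the curvature claim; everything else, including the fixed-set computation and the Euler characteristic vanishing, is immediate from the product structure. I do not anticipate a real obstacle: the proof is essentially a combination of Proposition~\ref{proposition:combinatorial-odd-dim-examples} with the standard fact that $\CAT(0)$ is preserved under products, so the main theorem's machinery plays no role here—this is a direct geometric construction.
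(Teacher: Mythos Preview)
Your proof is correct and matches the paper's own argument essentially verbatim: use Proposition~\ref{proposition:combinatorial-odd-dim-examples} directly for odd $n$, and for even $n$ cross the odd-dimensional example with a trivially-acted-upon $S^1$ to obtain a locally $\CAT(0)$ manifold whose fixed set is a disjoint union of tori. The paper also remarks, as you implicitly do, that this product construction is why one must pass from $\CAT(-1)$ to $\CAT(0)$.
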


\bibliographystyle{amsalpha}
\bibliography{references}

\end{document}